\documentclass[english]{amsart}
\usepackage[T1]{fontenc}
\usepackage{color}
\usepackage{textcomp}
\usepackage{subfig}
\usepackage{amsmath}
\usepackage{amssymb}
\usepackage{amsthm}
\usepackage[english]{babel}
\usepackage{graphicx}
\theoremstyle{definition}

\newtheorem{theorem}{Theorem}
\newtheorem{lemma}[theorem]{Lemma}
\newtheorem{proposition}[theorem]{Proposition}

\newenvironment{proofof}[1]{\noindent {\bf{Proof of #1.}}}{ \hfill\qed\\ }

\newcommand{\eqdef}{\mathrel{\mathop=}:}
\def\N{\mathbb{N}}
\def\Z{\mathbb{Z}}
\def\A{\mathcal{A}}
\def\a{\vec{a}}

\def\x{\vec{x}}
\def\y{\vec{y}}
\def\z{\vec{z}}

\def\A{\mathcal{A}}
\def\L{\mathcal{L}}
\def\BL{\mathcal{BL}}

\def\X{X_{aux}}

\begin{document}
\title[Entropy and complexity of spy billiards]{Entropy and complexity of polygonal billiards\break  with spy mirrors}
\author {Alexandra Skripchenko}
\address{Faculty of Mathematics, National Research University Higher School of Economics, Vavilova St. 7, 112312 Moscow, Russia}
\email{sashaskrip@gmail.com}

\def\curraddrname{{\itshape Address}}
\author{Serge Troubetzkoy}
\address{Aix Marseille Universit\'e, CNRS, Centrale Marseille, I2M, UMR
  7373, 13453 Marseille, France}
\curraddr{ I2M, Luminy\\ Case 907\\ F-13288 Marseille CEDEX 9\\ France}
 \email{serge.troubetzkoy@univ-amu.fr}

\begin{abstract}
We prove that a polygonal billiard with one-sided mirrors has zero topological entropy.  In certain cases we show
sub exponential and for other polynomial estimates on the complexity.
 \end{abstract}
 \maketitle
\section{Introduction}
\subsection {Polygonal billiards with one-sided mirrors}
We consider a table  consisting of a polygon $Q\subset \mathbb R^{2}$ (not necessarily rational) with several one-sided mirrors inside; i.e.\ a straight line segments connection pairs of  points in $Q$, each of
which  has two sides, a transparent side and
a reflecting side. The billiard  is defined as follows. Consider a point particle and a direction $\theta\in \mathbb S^{1}$; the point moves in the direction $\theta$ with a unit speed up to the moment when it reaches the boundary, if it arrives at a transparent side of a mirror it passes through it unperturbed, while if it arrives at  a reflecting side of a mirror or at the boundary of  $Q$ it is reflected with the usual law of geometric optics, the angle of incidence equals the angle of reflection.

Polygonal billiards with one-sided mirrors were described for the first time by M.\ Boshernitzan and I.\ Kornfeld in \cite{BK}, in this article one-sided mirrors were called spy mirrors. However, they considered the
less general case of rational polygons with the mirrors that form rational angles with the sides of polygonal table.  Such tables give rise to interval translation maps, a generalization of
interval exchange maps.  In contrast of interval exchange transformations, interval translation maps are poorly understood, only a few
results are known (see \cite{BK}, \cite{SchT}, \cite{Ba}, \cite{BT}, \cite{SIA},  \cite{BC}, \cite{V}, \cite{ST}).
A particular example of a rational billiard with one-sided mirrors,  the square with a vertical one-sided mirror with one end point on the bottom side of the square, was studied in \cite{ST}.

In this article we will prove two types of results. In the setting of an arbitrary polygon with one-sided mirrors we show that the topological entropy of our system is zero.  In certain more
restricted settings we show that we have sub exponential or polynomial growth estimates. The next two subsections describe these results in more 
detail.

\subsection{Topological entropy}
We prove that the polygonal billiard with one-sided mirrors has zero topological entropy.  To
do this we first consider 
the inverse limit space of a polygonal billiard with one-sided mirrors and show that it has zero topological entropy (an exact statement is provided below). We show that the inverse limit space in our case is closely related with the attractor (the notion of the attractor of the billiard map was introduced in \cite{ST}) and that the attractor has zero topological entropy. Then we extend the zero entropy result to the
full phase space.

There exist several different proofs of zero topological entropy for polygonal billiards without  one-sided mirrors (see \cite{K}, \cite{GKT}, \cite{GH}). Our proof mainly uses some ideas from \cite{GKT} and \cite{K}. The main difference with the situation of classical billiards is the  non-invertibility of our system. 
Also, J. Buzzi in \cite{Bu} showed a closely related result that piecewise isometries in any dimension are of zero entropy. 
For rational polygonal billiards with rational one-sided spy mirrors, zero topological entropy is a corollary from the fact that the directional complexity is at most polynomial \cite{Ba}, and the variational principle.

Throughout the article the term {\it side} will denote a side of the 
polygon $Q$ or a side of a one-sided mirror, and the term {\it vertex}  denotes an end point of a side. 
 We will are denote by $q$ the number of sides of $Q$ and $r$ the number of spy mirrors, thus
we have $q + 2r$ sides. The collection of sides
will be call the {\em boundary} $\Gamma$.
We will consider the {\em billiard  map} $T$, the first return map to  $\Gamma$. 
The {\em phase space} $T\Gamma$ of the billiard map is the subset of inner
pointing vectors of unit tangent bundle (for vectors with base point in a one-sided mirror this means
that if we reverse the direction of the vector it will point at the reflecting side of the mirror).
Note that if the billiard orbit arrives at a vertex of $Q$ then the collision rule is not well defined
since we can reflect with respect two different sides, thus the billiard map is not defined for such points.
 Let $\pi: T\Gamma \to \Gamma$ denote the natural projection.

Let $I_i : 1 \le i \le q + 2r$ be an enumeration of the sides of $Q$.
The forward orbit of a point $x$ can be coded by the sequence  of sides  hit by the orbit.
Let
$$\Sigma^+ \eqdef \{ \a := (a_i)_{I \in \N}: \exists x \text{ such that } \pi(T^ix) \in I_{a_i} \ \forall i \ge 0\}.$$
In this definition it is implicitly assumed that the map $T^ix$ is defined for all $i \ge 0$. 
We use the discrete topology on the collection of sides, and the product topology on $\overline{\Sigma^+}$.
The left shift map on $\overline{\Sigma^+}$ will be denoted by $\sigma$. The main result of this section is the following theorem.

\begin{theorem}\label{zero''} For any polygon with spy mirrors we have
$$h_{top}(\overline{\Sigma^+},\sigma) = 0.$$
\end{theorem}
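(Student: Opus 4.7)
The plan is to bound the word complexity
\[
p(n) \defeq \#\{w_0\ldots w_{n-1} : w \text{ is a prefix of some } \a\in\overline{\Sigma^+}\}
\]
sub-exponentially, since $h_{top}(\overline{\Sigma^+},\sigma)$ equals the exponential growth rate of $p(n)$. I would proceed through the three stages already signalled in the introduction: first identify $p(n)$ with the number of non-empty $n$-cylinders in the phase space $T\Gamma$; second, pass to the attractor $\A\subset T\Gamma$ of \cite{ST}, on which $T$ is essentially invertible, and show $h_{top}(\A,T)=0$; third, transfer the bound to the inverse limit over $\A$ and then to the one-sided shift $\overline{\Sigma^+}$ via the natural coding map, which is a continuous factor under which topological entropy cannot increase.

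The core of the argument lies in the second stage. Following Galperin--Kruger--Troubetzkoy \cite{GKT} and Katok \cite{K}, I would fix a direction $\theta\in\mathbb S^1$ and bound the directional complexity. The subset of $\Gamma$ on which $T^n(\cdot,\theta)$ is defined decomposes into finitely many open intervals whose endpoints are preimages of the $q+2r$ vertices of the figure. A combinatorial count as in \cite{GKT} shows that each such vertex produces only $O(n)$ endpoints at depth $n$, so the number of intervals grows polynomially in $n$. Each interval is contained in a single $n$-cylinder, giving polynomial directional complexity. An averaging or covering argument across directions then upgrades this to a sub-exponential bound on $p(n)$.

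The main obstacle is the non-invertibility of $T$ caused by the transparent side of each mirror: two distinct pre-histories can share the same forward orbit, so the standard Katok-style argument, which propagates parallel strips backwards in time, does not apply directly. Restricting to the attractor $\A$ resolves this, because points of $\A$ are limits of forward orbits and the inverse limit over $\A$ provides a genuinely invertible model whose topological entropy equals that of $T|_\A$. The passage from the attractor to the full symbolic system is then routine, since every orbit must eventually fall into any neighbourhood of $\A$ and entropy is preserved under taking the inverse limit. The most delicate technical point is verifying that crossings of transparent sides do not inflate the polynomial count of singular intervals in a fixed direction; essentially each mirror must be treated as producing two virtual discontinuity loci (one per side) rather than a single one, and one must then check that the resulting bound still grows polynomially in the iterate $n$.
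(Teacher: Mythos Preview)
Your proposal diverges from the paper's argument and contains a real gap.

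The paper does \emph{not} bound $p(n)$ directly. It works on the two-sided shift $\overline{\Sigma}$ (the inverse limit of $\overline{\Sigma^+}$) and shows that every ergodic invariant measure there has zero entropy, then invokes the variational principle. The engine is the strip argument from \cite{GKT}: two points of $\Omega$ with the same \emph{backward} code must be parallel (Lemma~\ref{parallel}), and a Birkhoff-recurrence argument then forces any non-degenerate parallel strip with a common code to be periodic (Proposition~\ref{unique}). Hence for non-periodic $\a\in\Sigma^-$ the fibre $X(\a)$ is a single point, the time-zero partition is one-sided generating, and Rokhlin's formula gives $h_\mu=0$ (Lemma~\ref{zero'}). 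A separate lemma disposes of measures on $\partial\Sigma$, and the passage from $\overline{\Sigma}$ back to $\overline{\Sigma^+}$ is handled by a general inverse-limit/non-wandering fact (Proposition~\ref{NW}).

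Your plan instead fixes an initial direction $\theta$, asserts a polynomial bound on the number of continuity intervals of $T^n(\cdot,\theta)$, and then proposes to ``average or cover'' over directions. Two problems. First, the claim that each vertex contributes $O(n)$ endpoints at depth $n$ is not what \cite{GKT} establishes; their contribution is coding injectivity via parallel strips, not a singular-set count in a fixed direction. For an \emph{irrational} polygon no polynomial (or even a priori sub-exponential) directional bound is available; indeed the polynomial estimates in this very paper (Theorems~\ref{thm2}--\ref{thm4}) are proved only in special rational or symmetric situations precisely because direct counting fails in general. Second, the passage from per-direction bounds to a bound on the total $p(n)$ is left unspecified; for irrational tables the set of directions carried by length-$n$ orbit segments is uncountable and there is no evident covering argument. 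Your third paragraph gestures at the attractor and inverse limit, which are the right objects, but you use them only to justify a step in a directional-count argument that itself is not substantiated. The route that actually works in full generality is the measure-theoretic one through coding uniqueness and the variational principle.
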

Suppose that $\mu^+$ is an invariant measure on $\overline{\Sigma^+}$, Theorem \ref{zero''} implies that $\sigma$ is
$\mu^+$ almost surely invertible.
The {\em  complexity} $p(n)$ is the number of words on length $n$ which appear in $\Sigma^+$.  Theorem \ref{zero''}
implies that $\lim_{n \to \infty} \log(p(n))/n = 0$. 

Our proof of Theorem 1  uses invertibility,  we begin by working in the  inverse limit of the coding space  
$$\Sigma \eqdef  \{ \a := (a_{i})_{I \in \Z}: (a_{i-j})_{i \ge j} \in \Sigma^+ \ \forall j \in \Z\}.$$
We also introduce
$$\Sigma^- \eqdef \{ \a := (a_i)_{I \in \N}: \exists (b_i)_{i \in \Z} \in \Sigma \text{ such that } a_i = b_i \ \forall i \le 0\}.$$
Finally the inverse limit of the billiard map is
$$\Omega \eqdef \{ \x := (x_i)_{i \in \Z}: Tx_i = x_{i+1} \ \forall i \in \Z \}.$$
Here we again assume that the forward orbit $T^jx_i$ is definite for all  $i$ and all $j \ge 0$.
We use the natural topology of $T\Gamma$ on $x_0$ and the product topology on $\Omega$.

 The attractor of the billiard map is the set $\A := \cap_{n \ge 0} T^n (T\Gamma)$, where for
 the set $T^n(T\Gamma)$ we consider
 only the points in $T\Gamma$ for which $T^n (T\Gamma)$ is defined.

All shift maps (on $\Sigma$ or $\Omega$) will be denoted by $\sigma$.   We extend the shift map to $\overline{\Omega}$, $\overline{\Sigma^+}$ and $\overline{\Sigma}$. We show that

\begin{theorem}\label{zero} For any polygon with spy mirrors we have
$$h_{top}({\A},T) = h_{top}(\overline{\Omega},\sigma) = h_{top}(\overline{\Sigma},\sigma) = 0.$$
\end{theorem}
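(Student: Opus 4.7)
The plan is to reduce the three equalities in Theorem~\ref{zero} to a single topological entropy statement about $(\A,T)$, and then to establish that statement by exploiting the piecewise-isometric structure of $T$. First I would set up the factor-map diagram linking the three systems. The projection
\[
\pi_{0}\colon \overline{\Omega}\to \overline{\A},\qquad (x_i)_{i\in\Z}\mapsto x_0,
\]
is continuous and intertwines $\sigma$ with $T$. By the very definition of the attractor, every point of $\A$ has a unique $T$-preimage inside $\A$ except at the countable set of orbits meeting a vertex, so $\pi_{0}$ is essentially a bijection and identifies $(\overline{\Omega},\sigma)$ with the natural extension of $(\A,T)$; this yields $h_{top}(\overline{\Omega},\sigma)=h_{top}(\A,T)$. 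The side-coding map $c\colon \overline{\Omega}\to \overline{\Sigma}$ is a continuous factor map, so $h_{top}(\overline{\Sigma},\sigma)\le h_{top}(\overline{\Omega},\sigma)$.

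With this diagram in place, the remaining task is to prove $h_{top}(\A,T)=0$. Let $\mathcal{P}$ denote the partition of $T\Gamma$ into the $q+2r$ cells determined by which side is hit at the next collision, and set $\mathcal{P}_n \defeq \bigvee_{i=0}^{n-1} T^{-i}\mathcal{P}$. On each atom of $\mathcal{P}$ the map $T$ is an isometry in the natural coordinates (arc-length on $\Gamma$ paired with the angle with the normal), and hence on each atom of $\mathcal{P}_n$ the iterate $T^{n-1}$ is an isometry as well. A standard Bowen-ball covering argument then shows that the maximal cardinality of an $(n,\varepsilon)$-separated subset of $\A$ is bounded by $C(\varepsilon)$ times the number $N_n$ of atoms of $\mathcal{P}_n$ that meet $\A$, which in turn equals the complexity $p(n)$ of $\Sigma$. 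Zero entropy follows once $p(n)$ is shown to grow subexponentially. Adapting the counting argument of \cite{K,GKT}, the discontinuity set $\bigcup_{i=0}^{n-1}T^{-i}\partial\mathcal{P}$ is a finite union of straight segments, each one the pullback of a vertex of $Q$ or of a spy mirror, and adding a new segment splits only a bounded number of existing cells; an inductive count then yields a polynomial bound on $p(n)$.

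The chief obstacle is the non-invertibility of $T$: the classical arguments track the forward $T$-orbit of each singular point, but points newly produced by a spy mirror have no past and cannot be pushed backwards in the same way. I would circumvent this by performing the cell count inside $\overline{\Omega}$, where invertibility is restored by construction, and then transporting the bound back to $\A$ along $\pi_{0}$. A secondary subtle point is that one must argue that the complexity of $\Sigma$ and of $\overline{\Sigma}$ coincide; this is straightforward since every finite word occurring in $\overline{\Sigma}$ already appears in the dense set $\Sigma$. Once $h_{top}(\overline{\Sigma},\sigma)=0$ is in hand, the chain of inequalities in the first paragraph forces all three entropies to vanish. As a cleaner alternative one may invoke the theorem of Buzzi~\cite{Bu} that piecewise isometries in any dimension have zero topological entropy and apply it to $T$ restricted to a compact invariant neighborhood of the attractor.
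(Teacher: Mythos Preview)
Your strategy reverses the paper's logic, and the reversal exposes real gaps. The paper proves $h_{top}(\overline{\Sigma},\sigma)=0$ \emph{first} and then passes down to the factors $\overline{\Omega}$ and $\A$; you try to prove $h_{top}(\A,T)=0$ first and lift. Both of your proposed routes to $h_{top}(\A,T)=0$ fail as stated.

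The central gap is the claimed polynomial bound on $p(n)$. The references \cite{K} and \cite{GKT} contain no such bound for an arbitrary (in particular irrational) polygon, and none is known; the paper itself obtains polynomial or stretched-exponential bounds only under strong extra hypotheses (Theorems~\ref{thm2}--\ref{thm4}). Katok's argument runs in exactly the opposite direction: he shows that every ergodic invariant measure has zero entropy, invokes the variational principle to get $h_{top}=0$, and subexponential growth of $p(n)$ is then a \emph{consequence}, not the method. Your sketch ``adding a new segment splits only a bounded number of existing cells; an inductive count then yields a polynomial bound'' is precisely the step that is not available: each $T^{-i}(\partial\mathcal{P})$ is a union of arcs whose number may grow with $i$, and controlling that growth is the entire difficulty. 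Moving the cell count to $\overline{\Omega}$ restores invertibility but does nothing to bound the number of cells.

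Two further problems. First, your assertion that $\pi_{0}$ is ``essentially a bijection'' because $T$-preimages in $\A$ are unique off a countable set is false: any point whose previous collision lies on a spy mirror has two $T$-preimages (one that reflected, one that passed through from the transparent side), and this occurs on an open set---it is exactly the non-invertibility you later call the chief obstacle. Second, Buzzi's theorem \cite{Bu} treats genuine piecewise isometries; the polygonal billiard map is piecewise affine and area-preserving in the standard coordinates but not piecewise isometric, so that shortcut does not apply directly (the paper itself describes Buzzi's result only as ``closely related'').

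What the paper actually does is the Katok route adapted to the non-invertible setting. Proposition~\ref{unique} shows that the backward code determines the orbit (a single point in the aperiodic case, a finite union of periodic strips otherwise), so the time-zero partition of $\Sigma$ is a one-sided generator and Rokhlin's formula gives $h_\mu(\sigma)=H(\sigma\xi\mid\xi^{-})=0$ for every ergodic $\mu$ supported on $\Sigma$ (Lemma~\ref{zero'}); Lemma~\ref{support} disposes of measures on $\partial\Sigma$. The variational principle then yields $h_{top}(\overline{\Sigma},\sigma)=0$, and the factor maps $X:\overline{\Sigma}\to\overline{\Omega}$ and $Y:\Omega\to\A$ of Propositions~\ref{extend} and~\ref{attractor} push this down. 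Note that the factor runs from $\overline{\Sigma}$ to $\overline{\Omega}$, the reverse of your coding map $c$; this matters because $c$ is not well defined, let alone continuous, at points of $\partial\Omega$ whose orbits meet a vertex.
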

\noindent
and then we show that Theorem \ref{zero''} follows from Theorem \ref{zero}.

\subsection{Complexity estimates in special cases} 

A {\em generalized diagonal}  is an orbit segment which starts and ends in a vertex of $Q$,
let $N_{vert}(n)$ denote the number of generalized diagonals of combinatorial length at most $n$.

We begin with a general theorem, and then we will apply it to specific examples.
\begin{theorem}\label{thm1}
Suppose that $Q$ is a $q$-gon with $r$ spy mirrors, then 
$$p(n)
\le 1 + (q+2r- 1)n  +  \left (2((q+2r)^2-3)\sum_{j=0}^{n-1} \sum_{i=0}^{j} N_{vert}(i) \right ).$$
\end{theorem}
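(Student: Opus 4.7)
The plan is to partition the phase space $T\Gamma$ into cells on which the $n$-prefix of the coding is constant and to control the cell count inductively. For each $n \ge 0$, let $\mathcal{P}_n$ denote the collection of maximal connected subsets of $T\Gamma$ on which $(a_0,\dots,a_{n-1})$ is constant, so that $p(n) = |\mathcal{P}_n|$ and $\mathcal{P}_{n+1}$ refines $\mathcal{P}_n$. Cells of $\mathcal{P}_n$ are separated by \emph{singularity arcs}: smooth one-dimensional curves in the two-dimensional phase space $T\Gamma$ consisting of points $(x,\theta)$ whose $k$-th iterate, for some $k < n$, collides with a vertex of $Q$ or of a spy mirror.

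The main task is to bound $p(n) - p(n-1)$ by counting the new singularity arcs that appear at step $n$. From each side and each vertex there is at most one first-collision arc, and from each starting side there are at most $q+2r-1$ possible next sides, producing the baseline increment $q+2r-1$ responsible for the linear term in $n$. For $n \ge 2$, each new arc can be traced backward along its trajectory; either the backward continuation extends as a smooth one-parameter family throughout its whole past (in which case, as for classical polygonal billiards, the arc must eventually terminate at a vertex in the past), or it terminates immediately at another vertex. In either case, the arc is indexed by a generalized diagonal of combinatorial length $i \le n-1$ together with a transverse ``side configuration''; a careful count shows that each such diagonal can seed at most $2((q+2r)^2 - 3)$ distinct singularity arcs (the factor $(q+2r)^2$ enumerates ordered pairs of sides along the arc, with $-3$ correcting degenerate alignments). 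This yields
$$p(n) - p(n-1) \;\le\; (q+2r-1) + 2\bigl((q+2r)^2 - 3\bigr)\sum_{i=0}^{n-1} N_{vert}(i),$$
and telescoping from $p(0) = 1$ gives the claimed formula.

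The principal obstacle is the non-invertibility caused by spy mirrors: tracing a singularity arc backward across a transparent side can split it into several preimages, so a single generalized diagonal in the past may seed multiple present-day arcs, complicating the ``one-arc-per-diagonal'' accounting. I would overcome this by performing the geometric counting in the inverse-limit space $\Omega$ introduced above, where the dynamics is invertible, and then projecting the cell estimate down to $\Sigma^+$ via the natural surjection. A secondary bookkeeping issue is identifying the degenerate trajectory configurations responsible for the $-3$ correction in $(q+2r)^2 - 3$; a short case analysis of vertex-alignment patterns should handle this.
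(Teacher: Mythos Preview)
Your route is geometric (partition the phase space and count singularity arcs), whereas the paper's is purely combinatorial via a variant of Cassaigne's formula for the \emph{second} difference:
\[
s(j+1)-s(j)=\sum_{v\in\mathcal{BL}(j)}\bigl(m_b(v)-m_l(v)-m_r(v)+1\bigr)\;-\!\!\sum_{\substack{v\in\mathcal{L}_{np}(j)\\ m_r(v)>1}}(m_r(v)-1).
\]
Dropping the negative term and using the crude bounds $m_b(v)\le(q+2r)^2$, $m_l(v),m_r(v)\ge 2$ gives $m_b-m_l-m_r+1\le (q+2r)^2-3$ for each bispecial $v$. That arithmetic is the entire source of the ``$-3$''; it is not a correction for degenerate vertex alignments as you suggest. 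The factor $2$ has a completely separate origin: a bispecial word corresponds either to an ordinary generalized diagonal or to a one-parameter \emph{family} of ``spy generalized diagonals'' (orbit segments beginning on a spy mirror and ending at a vertex). Each such family sweeps out a sector at the terminal vertex whose right boundary is an ordinary generalized diagonal of length $\le n$; this injection gives $N_{spy}(n)\le\sum_{i\le n}N_{vert}(i)$, and hence $\#\mathcal{BL}(j)\le N_{vert}(n)+N_{spy}(n)\le 2\sum_{i\le n}N_{vert}(i)$. Two telescoping sums then finish.

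So there are two genuine gaps in your proposal. First, you are bounding the first difference $p(n)-p(n-1)$ directly, but the correspondence between ``arcs traced back to a vertex'' and generalized diagonals is exactly the bispecial/second-difference mechanism; without invoking it you have no control over how many singularity arcs a single generalized diagonal can spawn, and your claimed constant $2((q+2r)^2-3)$ per diagonal is unjustified. Second, and more importantly, the non-invertibility is not handled by passing to the inverse limit $\Omega$: that space is not a two-dimensional phase space on which one can count arcs, and projecting a cell count from $\Omega$ down to $\Sigma^+$ goes the wrong way for an upper bound. The paper's key device here is the notion of spy generalized diagonal together with the sector-boundary injection (Proposition~\ref{prop}); this is the idea your sketch is missing.
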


We call $Q$ a  {\em symmetric polygon with spy mirrors}  if there is a polygon $P$ such that $Q$ is obtained from $P$ via a finite unfolding
and there are finitely many spy mirrors which are contained in the common edges of the unfolded copies of $P$ (see Figure \ref{s}).

 \begin{figure}[h]
\vspace{-4cm}
\includegraphics[width=7.1cm,height=10cm]{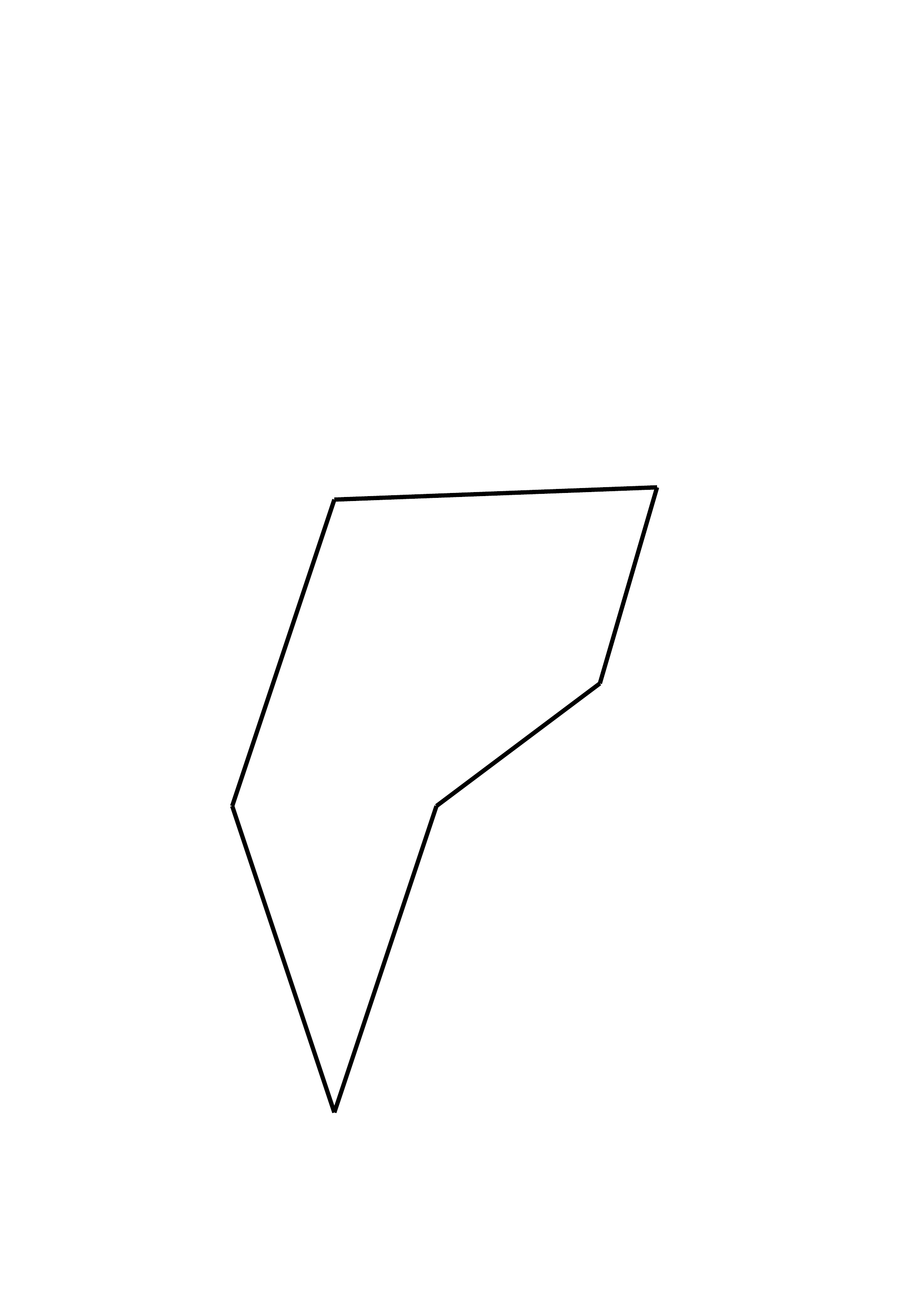}
\vspace{-1.5cm}
\caption{A symmetric polygon with two spy mirrors}
\label{s}
\end{figure} 

\begin{theorem}\label{thm2}
Suppose that $Q$ is  a rational symmetric  polygon with spy mirrors. Then there is a constant $C > 0$ so that
the total  complexity satisfies $p(n) \le Cn^4$ for all $n \ge 0$. 
\end{theorem}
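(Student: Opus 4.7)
The plan is to combine Theorem \ref{thm1} with a classical quadratic estimate for saddle connections. By Theorem \ref{thm1}, it is enough to prove that $N_{vert}(n) = O(n^2)$: then $\sum_{j=0}^{n-1} \sum_{i=0}^j N_{vert}(i) = O(n^4)$ and the linear term is absorbed into $Cn^4$.

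To control $N_{vert}(n)$ I would use the rationality and the symmetry hypothesis to lift the dynamics to the translation surface $\L$ associated to the rational polygon $P$ via the standard Zemlyakov--Katok construction. Because $Q$ is a finite unfolding of $P$, it is naturally a union of finitely many reflected copies of $P$ glued along interior edges, so $Q$ embeds isometrically into $\L$. The key claim to verify is that every billiard trajectory of $Q$ with spy mirrors lifts to a straight-line geodesic on $\L$: when the trajectory crosses a spy mirror transparently, the two copies of $P$ on either side are already glued in $\L$ by the same reflection used to build $Q$, so the lift simply continues straight; when the trajectory reflects off a spy mirror or off the external boundary of $Q$, the standard billiard unfolding again produces a straight continuation in $\L$.

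Given this lift, a generalized diagonal in $Q$ of combinatorial length at most $n$ corresponds to a geodesic segment on $\L$ of Euclidean length at most $n \cdot \mathrm{diam}(Q)$, joining two points of the finite set obtained by lifting the vertices of $Q$ (these include all cone singularities of $\L$ coming from the corners of $P$, together with the finitely many endpoints of spy mirrors). I would then invoke Masur's theorem: the number of saddle connections of length at most $L$ on a compact translation surface is $O(L^2)$, with an analogous bound for geodesic segments between points of a fixed finite marked set. This yields $N_{vert}(n) = O(n^2)$, and Theorem \ref{thm1} gives $p(n) \le Cn^4$.

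The main obstacle will be the careful verification of the lift in the previous paragraph: one must check that both \emph{transparent crossing} and \emph{reflection} at spy mirrors really correspond to \emph{continue straight} on $\L$, which requires matching the reflection data used to assemble $Q$ from copies of $P$ with the gluing data of $\L$. A secondary subtlety, easily handled, is that a spy mirror endpoint need not be a singular point of $\L$, so one must apply the marked-point version of the quadratic estimate (or count directly, since there are only finitely many such endpoints and a bounded number of directions between any two marked points give a segment of length at most $L$).
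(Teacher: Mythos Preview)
Your approach is correct and essentially the same as the paper's: the paper phrases the reduction as a projection $Q \to P$ (both transparent crossings and reflections at spy mirrors fold to ordinary reflections in $P$, so every generalized diagonal of $Q$ projects to one of $P$, giving $N_{vert}^Q(n) \le k \cdot O(n^2)$ by Masur), and then applies Theorem~\ref{thm1}. Your lift to the translation surface $\L$ is the same mechanism viewed in the unfolded picture, and you are in fact slightly more careful than the paper in flagging the marked-point issue for spy-mirror endpoints.
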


Next we consider symmetric polygon with spy mirrors obtained from a triangle (non necessarily rational). 
Two smallest angles determine a triangle up to scaling, and billiards are scaling invariant.  Thus, up to scaling, the set of triangles is
a subset of $\mathbb{R}^2$ equipped with Lebesgue measure. In the next theorem the word {\em typical} will mean Lebesgue almost every.
\begin{theorem}\label{thm3}
Suppose that $Q$ is a symmetric polygon with spy mirrors obtained from a typical triangle.  Then 
for every $\varepsilon > 0$ there is a constant $K > 0$ so that $p(n) \le Ke^{n^\varepsilon}$ for all $n \ge 0$.

\end{theorem}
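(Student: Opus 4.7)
The plan is to combine Theorem~\ref{thm1} with a sub-exponential bound on $N_{vert}(n)$ inherited from billiards in the underlying typical triangle $P$. By Theorem~\ref{thm1} we have
$$p(n) \le O(n) + C \sum_{j=0}^{n-1} \sum_{i=0}^{j} N_{vert}(i),$$
so it suffices to prove that for some $\varepsilon' < \varepsilon$ one has $N_{vert}(n) \le K_0 e^{n^{\varepsilon'}}$; the double summation then contributes at most $n^2 K_0 e^{n^{\varepsilon'}}$, which is bounded by $K e^{n^{\varepsilon}}$ for $n$ sufficiently large.

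The first step is to reduce counting generalized diagonals in the spy-mirror system on $Q$ to counting them in the triangle $P$. Since $Q$ is a finite unfolding of $P$, each straight-line segment in $Q$ between collisions projects to a billiard orbit segment in $P$. When a trajectory in $Q$ meets a spy mirror, the side encountered (transparent or reflecting) is uniquely determined by its direction, so no combinatorial branching is introduced. A generalized diagonal in $Q$ of combinatorial length $n$ thus projects to a billiard segment in $P$ of combinatorial length at most $n$ that starts and ends either at a vertex of $P$ or at one of the finitely many interior edge points which are projections of spy-mirror endpoints. Consequently $N_{vert}(n)$ for $Q$ is controlled, up to a multiplicative constant depending on the number of copies of $P$ in the unfolding, by the analogous count in $P$ with this enlarged but still finite vertex set.

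The second step is the input from typical-triangle theory: for Lebesgue almost every triangle $P$ and every $\varepsilon' > 0$, the number of generalized diagonals (with respect to the original or the enlarged vertex set) of combinatorial length at most $n$ grows sub-exponentially, as a consequence of a Diophantine condition on the angles of $P$ that holds on a full-measure set. This carries over to the enlarged vertex set since the standard counting argument can be localized around each additional marked point.

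The main obstacle is to make the reduction in the first step fully rigorous, and in particular to verify that adding the finitely many projections of spy-mirror endpoints as marked points does not spoil the sub-exponential typical-triangle bound. This is a careful but routine bookkeeping argument, similar in spirit to the one used in the proof of Theorem~\ref{thm2}. Once it is in place, plugging the typical-triangle estimate into Theorem~\ref{thm1} immediately yields $p(n) \le K e^{n^{\varepsilon}}$, as desired.
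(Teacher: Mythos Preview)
Your overall strategy is the same as the paper's: bound $N_{vert}^Q(n)$ by projecting to the base triangle $P$, invoke Scheglov's sub-exponential estimate for typical triangles, and plug into Theorem~\ref{thm1}. The paper does this in three lines: it cites \cite{Sch} for $N_{vert}^P(n)\le Ce^{n^\varepsilon}$, observes that since $Q$ is a $k$-fold cover of $P$ every generalized diagonal in $Q$ projects to a generalized diagonal in $P$ (so $N_{vert}^Q(n)\le k\,N_{vert}^P(n)$), and then applies Theorem~\ref{thm1}.

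Where you diverge is in your ``enlarged vertex set'' detour. The paper does not introduce marked interior edge points; it uses the symmetry of the construction to assert directly that a generalized diagonal in $Q$ projects to a genuine generalized diagonal in $P$. Your version instead allows endpoints at projections of spy-mirror tips and then claims that Scheglov's bound ``carries over to the enlarged vertex set since the standard counting argument can be localized around each additional marked point.'' That last sentence is doing real work that you have not justified: Scheglov's argument is about saddle connections for the triangle's angles, and it is not automatic that adding arbitrary marked points on edges preserves the estimate for a full-measure set of triangles. If you go this route you must actually prove that extension; otherwise follow the paper and argue that the projection lands on honest generalized diagonals of $P$, which makes the marked-point issue disappear.
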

Also one can prove a special complexity estimation for a generalization of the billiard with square table that we studied in \cite{ST}.
\begin{theorem}\label{thm4}
Suppose that $Q$ is  the square with $k$  vertical spy mirrors. 
Then there is a constant $K > 0$ so that the total complexity satisfies
$p(n) \le K n^{k+4}$ for all $n \ge 0$. 
\end{theorem}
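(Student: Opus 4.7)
The plan is to apply Theorem~\ref{thm1} with $q = 4$ and $r = k$, reducing the bound on $p(n)$ to a bound on the number of generalized diagonals. Theorem~\ref{thm1} yields
$$p(n) \le 1 + (3+2k)n + 2\bigl((2k+4)^2 - 3\bigr)\sum_{j=0}^{n-1}\sum_{i=0}^{j} N_{vert}(i),$$
so it suffices to prove $N_{vert}(n) \le C n^{k+2}$ for some constant $C$, since the double summation then contributes a factor of $O(n^{k+4})$.

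To estimate $N_{vert}(n)$ I would exploit the fact that all spy mirrors are vertical. Unfolding the square by reflections across its four sides produces the flat torus $\mathbb{R}^2/(2\mathbb{Z})^2$, on which a billiard trajectory in direction $\theta$ becomes a straight-line flow (in one of the four directions $(\pm\cos\theta,\pm\sin\theta)$) except at spy-mirror crossings. Each of the $k$ spy mirrors lifts to four vertical segments in this torus. Because the mirrors are vertical, the vertical coordinate $y(t)$ evolves as a rotation of $\mathbb{R}/(2\mathbb{Z})$ independently of the spy mirrors, while the horizontal coordinate $x(t)$ evolves as the one-dimensional spy-mirror billiard on $[0,1]$ with $k$ one-sided mirrors: an interval translation map with $k$ extra discontinuities beyond the two endpoints.

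A generalized diagonal of combinatorial length at most $n$ is determined by its starting vertex $v_0$ (of which there are $4+2k$) and its direction $\theta$, so $N_{vert}(n)$ is controlled by the number of admissible direction classes. Without any spy mirrors, the classical lattice-point count on $\mathbb{R}^2/(2\mathbb{Z})^2$ gives $O(n^2)$ admissible directions of length at most $n$, recovering the familiar bound $N_{vert}(n) = O(n^2)$. I expect that each vertical spy mirror introduces an additional multiplicative factor of $n$: the discontinuity set of the $n$-th iterate of the horizontal one-dimensional sub-dynamics grows polynomially of degree $k$ (as in the analysis of interval translation maps of \cite{Ba} and \cite{ST}), and each new discontinuity yields $O(n)$ further admissible direction classes. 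Combining these contributions gives $N_{vert}(n) \le C n^{k+2}$.

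The main obstacle is the rigorous justification of the $O(n^k)$ factor arising from the $k$ vertical spy mirrors. In particular, one must establish that for a fixed direction class (modulo the Klein four-group generated by horizontal and vertical reflections) the number of generalized diagonals of combinatorial length at most $n$ is $O(n^k)$. I would proceed by induction on $k$, generalizing the analysis of the case $k = 1$ carried out in \cite{ST}: at each step the new vertical mirror introduces at most $O(n)$ new discontinuities in the iterated horizontal return map, contributing one more factor of $n$ to the count. Once this estimate is in hand, combining it with the $O(n^2)$ lattice-point bound on the direction classes yields $N_{vert}(n) \le C n^{k+2}$, and Theorem~\ref{thm1} produces the desired bound $p(n) \le K n^{k+4}$.
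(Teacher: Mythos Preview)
Your reduction via Theorem~\ref{thm1} to the estimate $N_{vert}(n)\le Cn^{k+2}$ is exactly the paper's strategy, and your instinct that each vertical mirror contributes an extra factor of $n$ is also on target. The difference lies entirely in how that bound on $N_{vert}$ is obtained.

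The paper does not separate the dynamics into a vertical rotation and a horizontal interval translation map, nor does it invoke discontinuity growth for ITMs. Instead it builds an explicit planar cover: unfold the square to the $2\times 2$ torus, then pass to $\mathbb{R}^2$, and realize each reflection off a vertical spy mirror as a rigid horizontal jump by a fixed amount $b_i\in\{2a_i,\,2-2a_i\}$ (depending on which side of the $i$-th mirror is reflecting). In this cover a generalized diagonal from the origin to a lattice/mirror-endpoint $(m,n)$ has a well-defined tuple $(j_0,j_1,\dots,j_k)$ recording how many vertical sides ($j_0$) and how many mirrors of each type ($j_i$, $i\ge 1$) it crosses; necessarily $\sum_{i=0}^k j_i = m-1$. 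The crucial observation is that the slope of any such diagonal is forced to equal $n/\bigl(m-\sum_{i\ge 1} j_i b_i\bigr)$, so for each tuple there is \emph{at most one} generalized diagonal joining $(0,0)$ to $(m,n)$. An elementary induction shows the number of admissible tuples is at most $m^k/k!$, and there are $O(N^2)$ target points at combinatorial distance $\le N$, yielding $N_{vert}(N)=O(N^{k+2})$ directly.

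By contrast, your sketch leaves the essential step unproved: you state that each mirror ``introduces at most $O(n)$ new discontinuities in the iterated horizontal return map,'' but you have not said what that return map is (the first-return to a vertical side is not an autonomous one-dimensional map --- it depends on the $y$-coordinate through the return time), nor how its discontinuities translate into generalized-diagonal counts. The induction you propose may well be workable, but as written it is a plan rather than an argument. The paper's route avoids this difficulty entirely by replacing the dynamical counting with a static, purely combinatorial one in the planar cover; this is both shorter and more transparent, and I would recommend adopting it.
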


\section{The proofs of the entropy results}
\subsection{Unfolding and strips} 
Consider the backwards billiard flow starting from a point in $\Gamma$; instead of reflecting the orbit about a side of $Q$  we  reflect the polygon about the same side and continue the orbit as a straight line. When it meets another side of the reflected polygon we repeat the procedure with respect to this side, etc. We can continue up to the moment when we hit the vertex. The copies of $Q$ obtained after such a reflection we will label with respect to a side that was an axis of reflecting ($Q_{A}$, for instance.)

\begin{lemma}
\label{parallel} Suppose $\x,\y \in \Omega$ and that $x_0$ and $y_0$ are not parallel,
then their backward codings can not coincide.
\end{lemma}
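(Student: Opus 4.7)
The plan is to unfold both backward trajectories simultaneously and derive a contradiction from the fact that two straight lines with different directions diverge linearly, while they are both constrained to traverse the same sequence of bounded-diameter tiles of $Q$. Both $x_0$ and $y_0$ lie on the common side $I_{a_0}$; write $\theta_x,\theta_y$ for their directions. Applying the backward unfolding of Section~2.1 to $x_0$, each reflection of the backward orbit is straightened into a continuation across a reflected copy of $Q$, while each transparent crossing is left as is; this turns the backward orbit of $x_0$ into a straight ray $\ell_x$ living in a sequence of unfolded copies $Q_0,Q_{-1},Q_{-2},\ldots$ glued along the sides prescribed by the coding $(a_{-i})_{i\ge 0}$. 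Because the backward coding of $\y$ coincides with that of $\x$, the very same development straightens the backward orbit of $y_0$, producing a second straight ray $\ell_y$ in the same tiles.

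Next I would verify that both rays have infinite length. Since $\x,\y\in\Omega$, their backward orbits are defined for all $i\le 0$, so both rays cross every unfolded side $I_{a_{-i}}$. If the tiles $\{Q_{-i}\}$ all stayed inside a bounded region of $\mathbb{R}^{2}$, then by finiteness some single tile would be traversed infinitely often by $\ell_x$, which is impossible since a straight line meets a fixed polygon in at most boundedly many segments. Hence the distinct tiles escape to infinity, and in particular each ray has infinite arclength.

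Now assume for contradiction that $\theta_x\ne\theta_y$. Working in the unfolded development, $\ell_x$ and $\ell_y$ start at two points of $I_{a_0}$ at distance at most $\operatorname{diam}(Q)$ apart and make a fixed positive angle $|\theta_x-\theta_y|$; consequently their perpendicular distance grows linearly with arclength (reflections, being isometries, preserve the angle between the two straightened rays tile by tile). On the other hand, for each $i\ge 0$ the two rays must meet the unfolded side $I_{a_{-i}}$ inside the tile $Q_{-i}$, so their crossing points lie within distance $\operatorname{diam}(Q)$ of one another. Since the arclength to $Q_{-i}$ tends to infinity with $i$, this contradiction proves the lemma.

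The main subtlety will be the handling of reflections across spy mirrors: reflecting along an interior segment of $Q$ produces an unfolded copy $Q_{-i}$ that may geometrically overlap previously placed tiles in $\mathbb{R}^{2}$. The cleanest way to deal with this is to regard the unfolded configuration as an abstract planar development whose tiles are formally distinct, and to carry out the straight-line and perpendicular-distance estimates intrinsically inside this development rather than via its (possibly self-overlapping) image in $\mathbb{R}^{2}$. Once this bookkeeping is set up, the divergence argument above proceeds unchanged.
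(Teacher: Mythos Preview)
Your approach is the same as the paper's: unfold both backward orbits along the common coding so that they become straight rays in the same chain of reflected copies of $Q$, and derive a contradiction from the linear divergence of non-parallel lines against the uniform bound $\operatorname{diam}(Q)$ on the distance between their $i$-th crossing points. The paper's version is two sentences; you have supplied the surrounding details, including the (correct) remark that geometric overlap of unfolded copies across spy mirrors is harmless once one works in the abstract development.

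One small correction: your justification of infinite arclength does not work as written. The abstract tiles $Q_{-i}$ are pairwise distinct and each is crossed exactly once by $\ell_x$, so ``by finiteness some single tile would be traversed infinitely often'' has no force here; boundedness of the immersed tiles in $\mathbb{R}^2$ only tells you the total flight time is finite, not that any tile repeats. What is actually needed is the standard fact that a polygonal billiard (with or without spy mirrors) cannot accumulate infinitely many collisions in finite time, which follows from the local bound on the number of consecutive reflections near any vertex; hence an $\x\in\Omega$, having all $x_{-i}$ defined, has infinite backward flight length. The paper's proof simply asserts the eventual divergence without addressing this point at all, so you are not behind the paper here---your attempted justification should just be replaced by the correct one.
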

\begin{proof}
We look at unfolding lines for the past orbits, see Figure \ref{1}, we suppose there codes coincide
for a certain interval of times. We remark that when
a backward orbit hits a one-sided mirror, there are two possible preimages, by definition these preimages 
have different codings; thus for the interval of times when the backward codings coincide the choice of
preimages (which is given since $\x$ and $\y$ are in the inverse limit space $\Omega$) is the same for $\x$ and for $\y$.

These lines are eventually linearly divergent,  thus the distance between them is eventually more than twice the diameter of $Q$, and so the backwards unfoldings of $x_0$ and $y_0$ must be different, i.e.\ the reflections must occur in different edges, so 
the codings are different. 
\end{proof}

 \begin{figure}[h]
\vspace{-5cm}
\includegraphics[width=9cm,height=11.5cm]{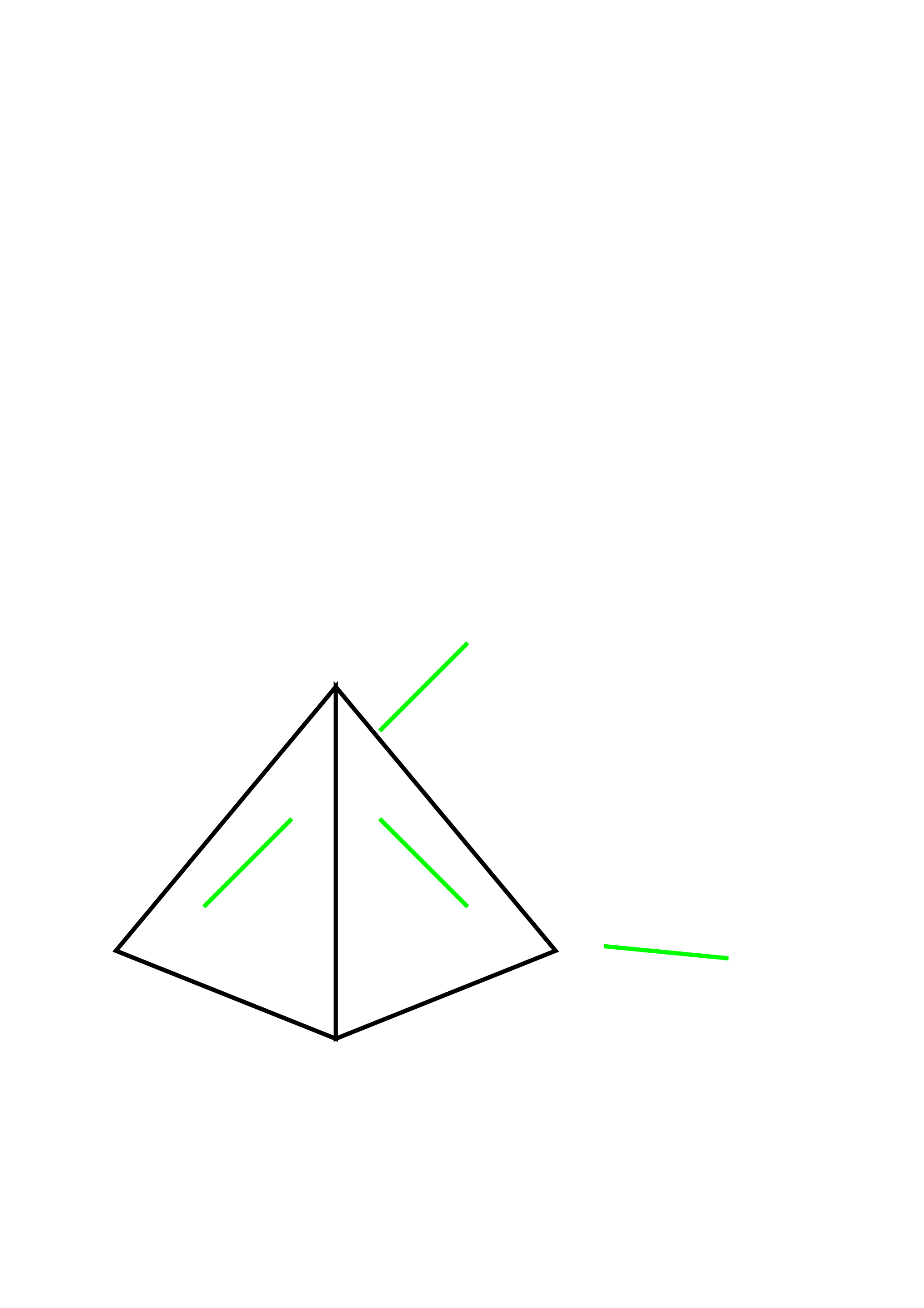}
\vspace{-2cm}
\put(-165,110){$A$}
\put(-207,120){$B$}
\put(-185,110){$M$}
\put(-197,80){$C$}
\put(-117,65){\vector(-1,1){30}}
\put(-120,60){$Q_A$}
\put(-62,120){\vector(-1,0){45}}
\put(-60,115){$Q_{AM}$}
\put(-225,62){\vector(1,1){30}}
\put(-230,55){$Q$}
\put(-75,170){\vector(-1,0){45}}
\put(-72,168){$Q_{AMA}$}
\put(-25,110){\vector(-1,-1){30}}
\put(-23,107){$Q_{AMC}$}
\caption{Non-parallel orbits have different coding}
\label{1}
\end{figure} 

\subsection{Uniqueness of the coding}

For each $\a \in \Sigma$, there exist at least one point  $\x \in \Omega$ such $\pi(\x_i) = \a_i$ for all $i \in \Z$, we denote the set of such $\x$ by $X(\a)$.
The set $X(\a)$ can also be defined for $\a \in \Sigma^-$, and since
for $\x\in \Omega$ from the definition it is immediate that $(x_i)_{i \le 0}$ determines $\x$ uniquely, it follows that 
 $X(\a) \subset \Omega$ for points $\a \in \Sigma^-$.
Lemma \ref{parallel} implies that the set $X(\a)$ consists of parallel points.

Fix $\a \in \Sigma^-$.  
We call $S \subset X(\a)$ a \emph{strip} if 
the set $\{x_0: \x \in S\}$ consists of 
parallel vectors whose base points form an interval.  Note that
the backwards
orbit of all the $\x \in S$ are nonsingular.
Using the unfolding procedure, we will think of $S$ as being a geometric strip in $\mathbb{R}^2$,
hence the name.
We have


\begin{proposition}\label{unique}
Suppose that $Q$ is an arbitrary polygon with a finite number of spy mirrors.
\begin{enumerate}
\item{}  For any $a\in \Sigma^{-}$ which is not periodic the set $X(\a)$ consists of only one point. 
\item{} For any $a \in \Sigma^{-}$ which is periodic the set $X(\a)$ consists of a finite union of 
parallel  strips.
\end{enumerate}
\end{proposition}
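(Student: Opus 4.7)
The plan is to parameterize $X(\a)$ using backward unfolding along the coding $\a$, and then analyze the resulting set of admissible basepoints.

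By Lemma \ref{parallel} every $\x \in X(\a)$ shares the same initial direction $\theta$. Unfold the past of $Q$ according to $\a$: at each spy-mirror hit the coding $\a$ selects one of the two preimages, so the unfolding is unambiguous and produces a chain of polygon copies $Q_{0}, Q_{-1}, Q_{-2}, \ldots$ in $\mathbb{R}^{2}$, with consecutive copies glued along the sides labelled $a_{-1}, a_{-2}, \ldots$. Each $\x \in X(\a)$ then corresponds bijectively to a straight ray from some basepoint $x_{0} \in I_{a_{0}}$ in direction $-\theta$ that stays in the chain forever; call the set of such basepoints $C \subset I_{a_{0}}$. Each partial condition ``the ray stays in $Q_{0} \cup \cdots \cup Q_{-n}$'' is a finite intersection of convex linear constraints on $x_{0}$, hence defines a closed interval $C_{n}$; consequently $C = \bigcap_{n} C_{n}$ is itself a (possibly degenerate) closed interval.

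For part (2), suppose $\a$ is periodic with period $p$. Then the chain is invariant under the Euclidean isometry $g$ obtained by composing the $p$ reflections across $a_{0}, a_{-1}, \ldots, a_{-p+1}$, and consistency of the direction $-\theta$ forces $g$ to preserve this direction. Hence the constraint at step $kp+j$ is the $g^{k}$-image of the constraint at step $j$ and adds no new information, so $C$ is already determined by $C_{p}$, a finite intersection of intervals. Lifted to the phase space, this yields the finite union of parallel strips claimed in (2). For part (1), assume for contradiction that $\a$ is not periodic yet $C$ contains a non-degenerate interval $J$. Set $J_{n} := T^{-n} J$, a length-$|J|$ isometric copy of $J$ on the side $I_{a_{-n}}$. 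Since only $q + 2r$ sides are available (each of finite length) while $\sum_{n} |J_{n}| = \infty$, pigeonhole yields $n > m$ with $J_{n} \cap J_{m} \neq \emptyset$ on a common side; in particular $a_{-(n-m)} = a_{0}$ and $J \cap J_{n-m}$ has positive length. For $y$ in the overlap, the two representations $y = T^{-n} x_{n} = T^{-m} x_{m}$ with $x_{n}, x_{m} \in J$ describe the \emph{same} forward orbit of $y$, and matching them forces $a_{-n+k} = a_{-m+k}$ for $0 \le k \le m$. Observing that $T^{n-m}$ acts as a rigid isometry on this overlap sending $J$-points back into $J$, one iterates the argument (either by repeating on the shorter interval $J \cap J_{n-m}$ or, equivalently, by studying the first-return map to $J$ under $T^{n-m}$) to upgrade this finite-range matching into full periodicity $\sigma^{n-m} \a = \a$, contradicting the hypothesis.

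The main obstacle is exactly this last bootstrapping step: a single overlap yields periodicity only on a bounded range of indices, and one must leverage the isometric self-similarity of the backward unfolding together with the uniqueness of forward iteration of $T$ to propagate periodicity to every index. The cleanest realization is to extract a maximal sub-interval of $J$ that is invariant under $T^{n-m}$ (whose non-emptiness follows from Poincar\'e-type recurrence on the finite-length side $I_{a_{0}}$), on which $T^{n-m}$ is a rigid isometric self-map and therefore either the identity or a reflection, so every orbit it carries is genuinely periodic with period $n-m$ or $2(n-m)$ in the coding, finishing the contradiction.
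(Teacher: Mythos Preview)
Your argument rests on the claim that each $C_n$ is a single closed interval, but this is precisely where spy mirrors differ from ordinary polygonal billiards. In the unfolded chain $Q_0,Q_{-1},\ldots$ the polygon copies still contain copies of the spy mirrors, and the requirement that the ray pass from side $a_{-k}$ to side $a_{-k-1}$ \emph{without touching any other side or mirror} is not convex: the ``shadow'' of each internal mirror on $I_{a_0}$ is an interval that must be \emph{removed}, so $C_n$ is in general only a finite union of intervals. This is exactly the phenomenon the paper flags with Figure~\ref{2}: points between two orbits sharing the backward code $\a$ can themselves have a different backward code. The paper handles this by introducing the auxiliary dynamics $g$, which rigidly carries the whole interval $(x_i,y_i)$ regardless of the true billiard map; only with this device does one obtain a genuine strip. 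Your part~(2) inherits the same problem: from ``$C_p$ is a finite intersection of intervals'' you would get a \emph{single} strip, so the phrase ``this yields the finite union of parallel strips'' has no justification in your argument---whereas in the paper the finitely many strips arise exactly from the finitely many mirror shadows punched out of the auxiliary strip over one period.

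For part~(1) there is a second gap. Your pigeonhole produces $n>m$ with the \emph{footpoint} intervals $J_n,J_m$ overlapping on a common side, but points of $J_n$ carry direction $\theta_n$ and points of $J_m$ carry direction $\theta_m$ (determined by the reflection sequence), and for an irrational polygon there is no reason these should agree; without $\theta_n=\theta_m$ the overlap is not a phase-space overlap and the ``same forward orbit of $y$'' identification fails. The paper avoids this by passing to a uniformly recurrent point via Birkhoff recurrence and then running a geometric limit argument on the strips $S_i$: either the limit strip is parallel (forcing periodicity) or a vertex is forced into the strip (contradiction). Your bootstrapping sketch, even granting the interval structure, does not supply a substitute for this dichotomy.
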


\begin{proof}
 
\begin{figure}[h]
\vspace{-4.5cm}
\includegraphics[width=9cm,height=11.5cm]{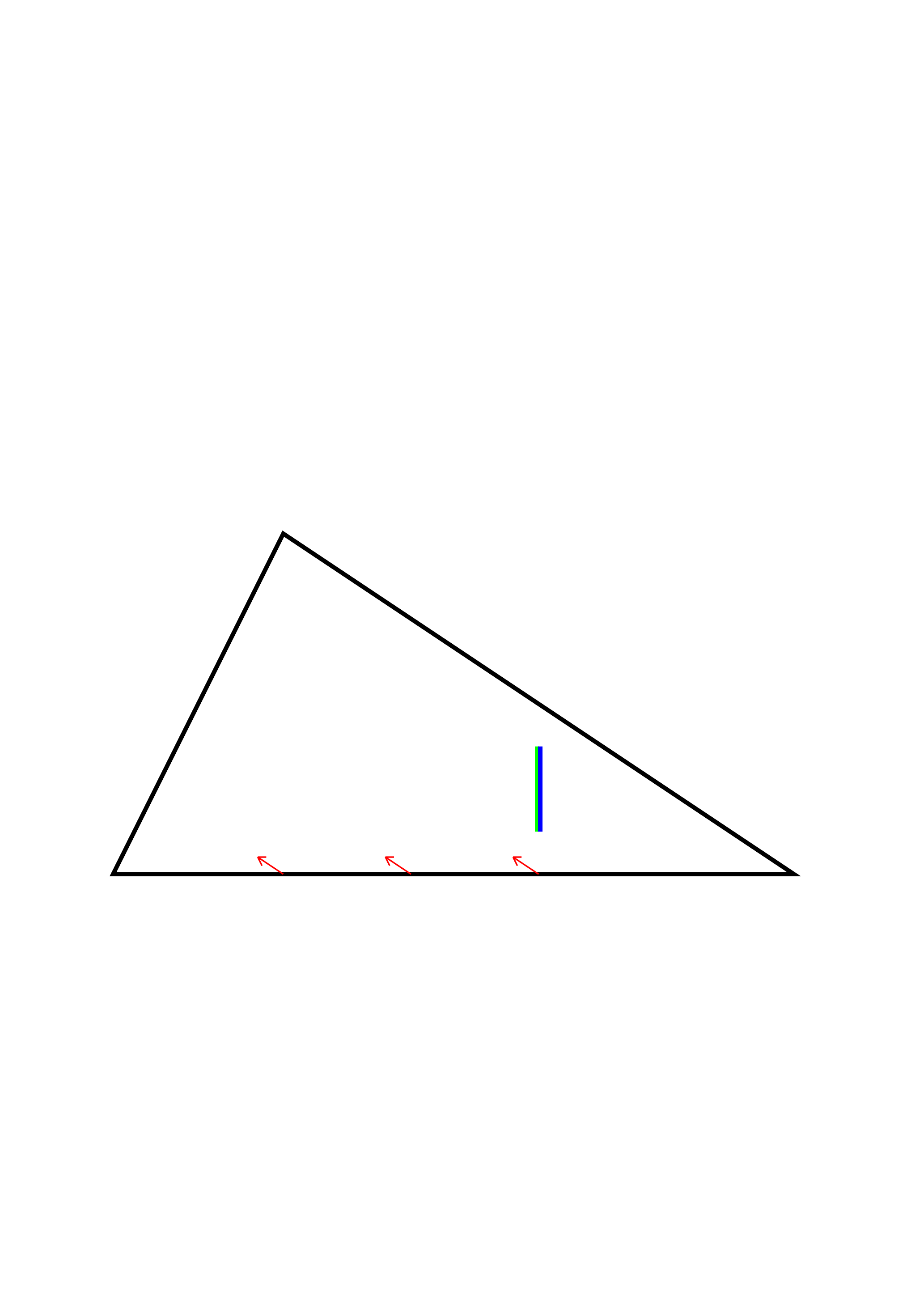}
\put(-180,100){$x$}
\put(-144,100){$z$}
\put(-109,100){$y$}
\vspace{-3.cm}
\caption{The orbit in the middle of the strip has different dynamics.}
\label{2}
\end{figure} 


(1)  The proof is by contradiction, consider two points $x,y \in \Omega$ with exactly the same aperiodic backwards coding $\a$,  Lemma \ref{parallel} implies
that  $x_i$ and $y_i$ are parallel for all $i \le 0$.
It may happen that the trajectories of points between them are not exactly the same (see Figure \ref{2} for a possible example). Note that this can not happen  for the usual billiard in
a simply connect polygon. In order to avoid this problem, and 
enable us to  use the strip argument from \cite{GKT}, we construct an auxiliary dynamical system by declaring that for  all points in between our two special have the same dynamics. Thus, with respect to this
auxiliary dynamics the set $S:= \X(\a)$ is a strip.  More precisely,  the auxiliary dynamics is defined as the map $g$ which rigidly 
maps the vectors in the strip with base-point in the
interval $(x_{i-1},y_{i-1})$ onto vectors in the strip with  the base-point in the interval $(x_{i},y_{i})$ (for all $i \le 0$). 

Consider the sequence of vectors $\z := (z_i)_{i \le 0}$ in the middle of the strip $S$, and the $\alpha$-limit set $Z$
of the auxiliary dynamics $g$ of $\z$. The set $Z$ is compact, and  since strips can not contain vertices  the inverse map $g^{-1}$ is continuous
on $Z$, thus 
we can apply the Birkhoff recurrence theorem,  to conclude that $Z$ contains a uniformly recurrent point $\x^*$ for the map $g^{-1}$. Fix a
sequence $n_i$  so that $g^{-n_i}\z \to \x^*$.
 We consider images of the original strip $S$ under  $g^{-n_{i}}$ and denote it by $S_{i}$. 
 Note that the widths of the $S_i$ are all greater than or equal to the width of $S$, and that $S_i$ converge to a strip $S(\x^*)$ centered at $x^*$ of the same width or more.
We can suppose (by re-enumerating the sides)  that $\x_0^{*}\in I_{1}$.

The point $\x_0^*$  is not tangent to the side $I_{1}$ since then the orbit of $\z$ would come arbitrarily close to one of the endpoints of the side $I_1$ which contradicts the positive width of the strip $S$.

We consider the set of points ${S}^\infty(\x^*)$ having the same
auxiliary backwards code as $\x^*$. Clearly ${S}^{\infty}(\x^*)$ is a strip and  ${S}^\infty(\x^*)  \supset S(\x^*)$.
The left and right boundaries of ${S}^{\infty}(\x^*)$ are two lines, we consider their $\varepsilon$-neighborhoods $N_{\varepsilon}^{L}$ and $N_{\varepsilon}^{R}$.
By the uniform recurrence of $\x^{*}$, vertices fall inside each of $N_{\varepsilon}^{L}$ and $N_{\varepsilon}^{R}$ with bounded gaps between their occurrences
(see Figure \ref{3}).  Fix one of the sides, say $N_{\varepsilon}  := N_{\varepsilon}^{L}$. 
Now $S_i \to S(\x^*)$, this can happen in two ways,  either
$S_i$ is parallel to $S(\x^*)$ for
all sufficiently large $i$ or  not.
In the second case 
the set
$N_{\varepsilon}\cap S_{i}  \subset N_{\varepsilon} \cap S$ contains  an $\varepsilon$-width rectangle with a height $L_{i}$ that goes to infinity as $i\to\infty$; thus  there must be
 a vertex in $S$ (see Figure \ref{3}(b)).  This contradicts  the fact that the backwards orbit
 of all $ \x \in S = X(\a)$ are non-singular,
 thus we can only have a single point $\x \in \Omega$ with aperiodic backwards coding $\a$.
 In the first case for sufficiently large $i$, since they are parallel  we  have $S_i \subset {S}^{\infty}(\x^*)$.
 We consider the set of points $S_i^{\infty}(\z)$ have the same auxiliary backward code as
 $g^{-n_i}\z$, clearly this is the maximal width strip such that $S_i^{\infty} \supset S_i$. By
 maximality $S_i^{\infty} = S^{\infty}(\x^*)$. This implies that $\x^*$ is periodic which contradicts
 the assumption that $\a$ is not periodic. Thus also in this case we can only have a single
 point $\x \in \Omega$ with periodic backwards coding $\a$.

\begin{figure}[h]
\centering
\vspace{-3cm}
\mbox{\subfloat{
\includegraphics[scale=0.3]{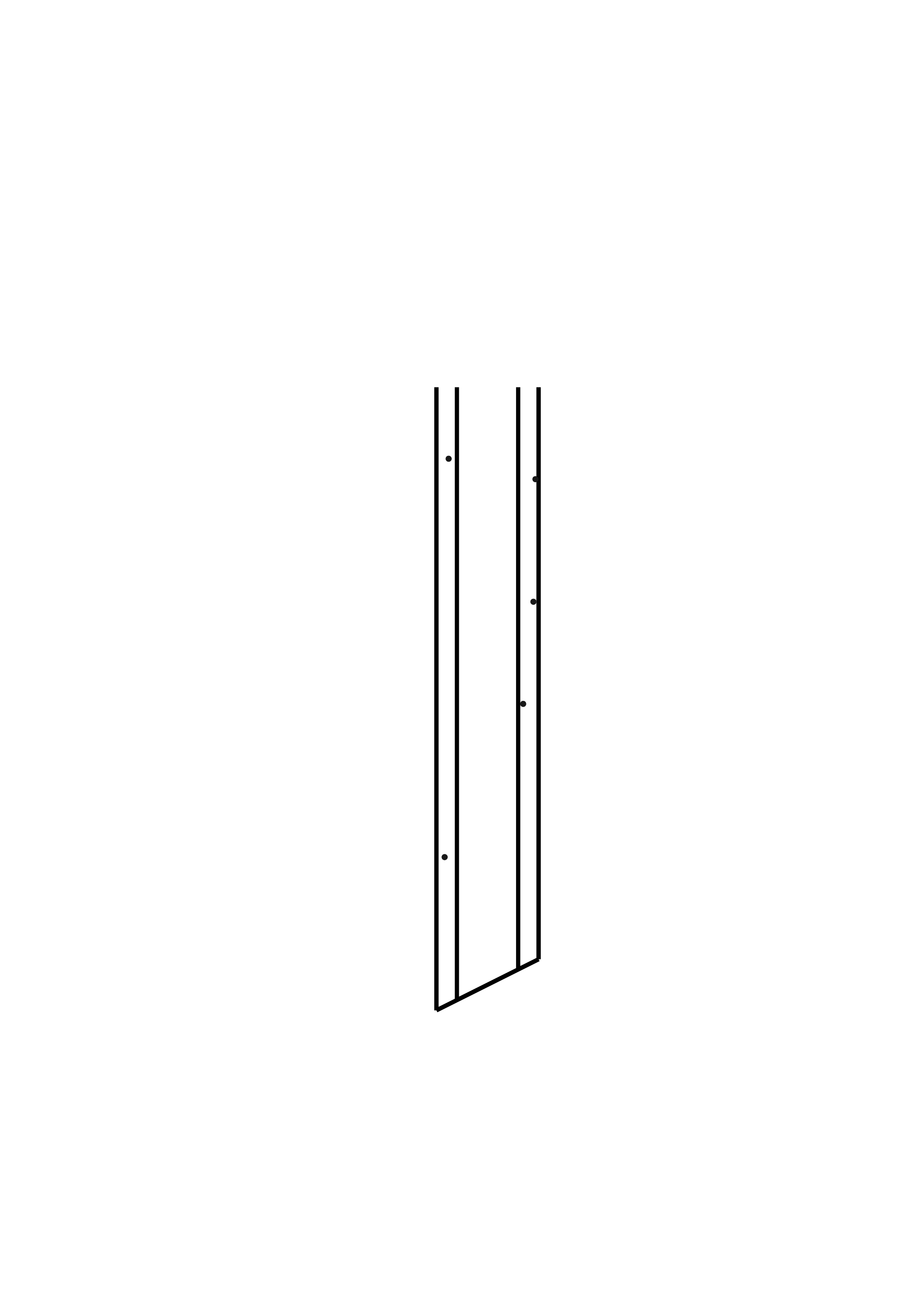}}
\put(-142,170){$N_{\varepsilon}^{L}$}
\put(-130,172){\vector(1,0){39}}
\put(-58,172){\vector(-1,0){19}}
\put(-57,170){$N_{\varepsilon}^{R}$}
\put(75,150){\vector(-1,0){53}}
\put(75,148){$S^{\infty}_{i}$}
\put(-55,140){\vector(-1,0){29}}
\put(-52,137){$S^{\infty}(x^{*})$}
\put(-17,140){\vector(1,0){60}}
\put(0,119){\line(1,0){30}}
\put(0,108){\line(1,0){30}}
\put(0,119){\line(0,-1){11}}
\put(-10,112){$L_{i}$}
\put(-134,143){\vector(2,1){42}}
\put(-134,142){\vector(3,-4){40.5}}
\put(-90,50){$(a)$}
\put(35,50){$(b)$}
\put(-190,140){\footnotesize{vertices or}}
\put(-194,130){\footnotesize{reflecting vertices}}

\quad
\subfloat{
\hspace{-2.5cm}
\includegraphics[scale=0.3]{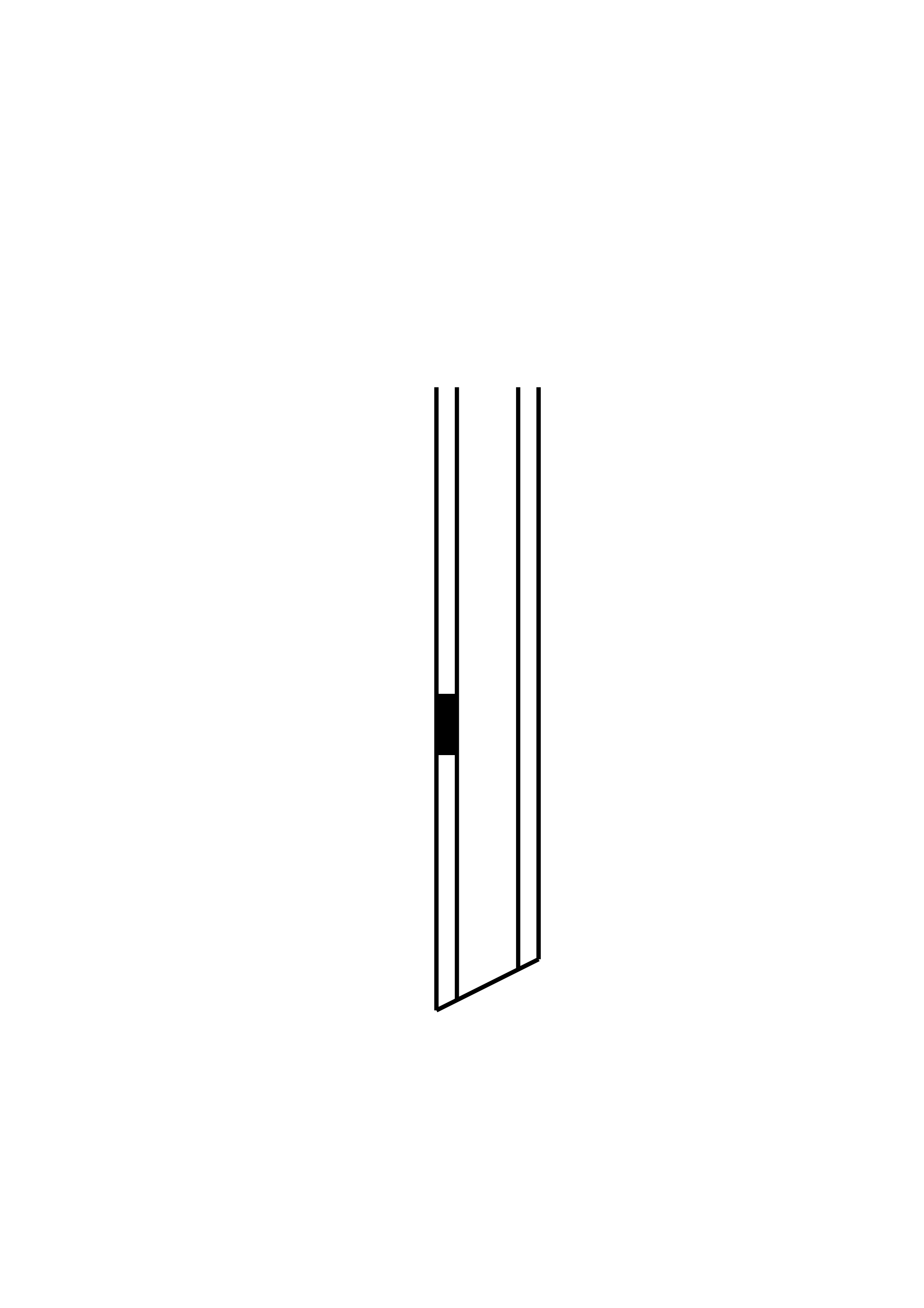}}}
\vspace{-2cm}
\caption{Vertices and recurrence points}
\label{3}
\end{figure}

 (2) Now suppose that $\a \in \Sigma^-$ is periodic.  
 Lemma \ref{parallel} implies that $X(\a)$ consists of parallel
 orbits.  
By the definition of 
the auxiliary dynamics, the set $\X(\a)$ with respect to the auxiliary dynamics is a strip.
Suppose that the period of $\a$ is $k$ and that 
 $\x \in \X(\a)$. From the
 periodicity of $\a$ we conclude that  $x_{i-k} = x_i$,  or $T^k(x_{i-k}) = x_i$, 
 for  all $i \le 0$.  But since $x_i$ for $i > 0$ is defined as $T^i(x_0)$ we conclude that $\x$ is
 $k$ periodic.   Since the map $T^k$ is a local isometry we conclude that  furthermore the width
 of the strip $\X(\a)$ is strictly positive.

\begin{figure}[h]
\vspace{-2.5cm}
\includegraphics[width=7cm,height=7.5cm]{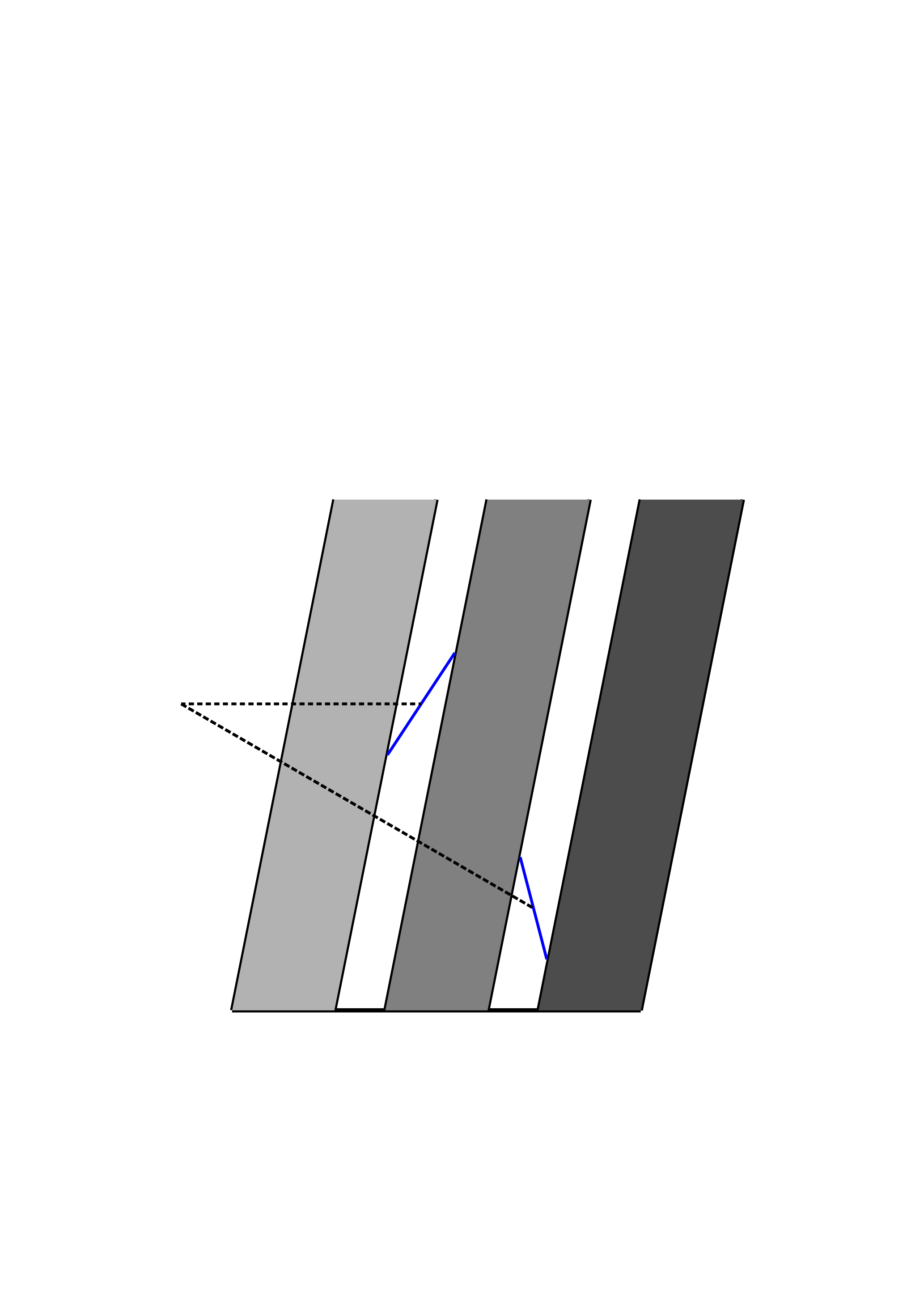}
\put(-200,95){$mirrors$}
\vspace{-1.5cm}
\caption{Finite union of strips}
\label{4}
\end{figure}

Consider the set $\X(\a) \cup T^{-1} \X(\a) \cup \cdots \cup T^{-k+1} \X(\a)$. Since we only consider
 a finite piece of the orbit the reflecting mirrors create at most finitely many "holes" in the auxiliary dynamics strip as in Figure \ref{4},  more precisely points for  which the auxiliary dynamics and the real dynamic 
 disagree, the set $X(\a) \subset \X(\a)$   is a finite union of strips.
 \end{proof}

Let
 $\partial \Sigma :=\overline{\Sigma} \setminus \Sigma$ and
  $\partial \Omega :=\overline{\Omega} \setminus \Omega$.
We extend $X$ to $\overline{\Sigma}$ as follows.
Suppose $\a \in \partial \Sigma$, let $\a^{(n)} \in \Sigma$ be such that $\a^{(n)} \to \a$. Let $\x^{(n)} \in X(\a^{(n)})$. Since $\overline{\Omega}$ is closed there is
an $\x \in \overline{\Omega}$ which is a limit point of the $\x^{(n)}$.  Let $X(\a)$ be the collection of all such $\x$.
We remark that all such points $\x \in \partial \Omega$
since $\a \not \in \Sigma$ .

\begin{proposition}\label{extend} If $\x \in \partial \Omega$ then there exist at most countably many $\a \in \partial \Sigma$ such
that $\x \in X(\a)$.  
\end{proposition}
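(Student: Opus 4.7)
The plan is to fix $\x \in \partial \Omega$ and show that $A(\x) := \{\a \in \partial \Sigma : \x \in X(\a)\}$ is at most countable.

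First I would extract a necessary geometric condition on any $\a \in A(\x)$. By the definition of $X(\a)$ there exist sequences $\a^{(n)} \in \Sigma$ and $\x^{(n)} \in X(\a^{(n)})$ with $\a^{(n)} \to \a$ and $\x^{(n)} \to \x$. Since the alphabet of sides is finite and discrete, coordinate-wise convergence $\a^{(n)} \to \a$ forces $a_i^{(n)} = a_i$ for all sufficiently large $n$ (at each fixed $i$); combined with $\pi(x_i^{(n)}) \in I_{a_i^{(n)}}$ and continuity of $\pi$, this yields $\pi(x_i) \in \overline{I_{a_i}}$ for every $i \in \Z$. Let $V := \{i \in \Z : \pi(x_i) \text{ is a vertex of } \Gamma\}$. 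For $i \notin V$ the label $a_i$ is pinned down uniquely by $\pi(x_i)$, while for $i \in V$ it is one of the at most $q+2r$ sides meeting at $\pi(x_i)$. If $V$ is finite this already gives $|A(\x)| \le (q+2r)^{|V|} < \infty$, so the interesting case is $|V| = \infty$.

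To handle infinite $V$, I would parameterize approximating orbits by their initial phase point. The phase space around $x_0$ is $2$-dimensional (base point on $\Gamma$ times direction), and any $\x^{(n)} \in \Omega$ converging to $\x$ is determined by $x_0^{(n)}$ in a punctured neighborhood of $x_0$. For each index $j \in \Z$, the set of initial points $y$ whose orbit has $T^{j} y$ sitting exactly on a vertex is a $1$-dimensional analytic locus. Taking the union over $j \in \Z$ yields a countable family of smooth curves through (or accumulating at) $x_0$; their complement near $x_0$ has at most countably many connected components. On each component the itinerary of $x_0^{(n)}$ is locally constant, so the approximating coding $\a^{(n)}$ stabilizes on every finite window and produces a single limit $\a$. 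Since distinct limits in $A(\x)$ must arise from distinct components, I would conclude $|A(\x)| \le \aleph_0$.

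The main obstacle is justifying that the complement of these vertex-hitting loci near $x_0$ has only countably many components and that each contributes exactly one coding in the limit. The rigidity already used in Proposition \ref{unique} is the key tool: two approximating orbits lying in the same component belong to a common strip and therefore pass on the same side of the vertex $\pi(x_i)$ for every $i \in V$, forcing the same label $a_i$ in the limit. Conversely, initial points chosen in different components eventually separate around at least one vertex $\pi(x_i)$, producing codings that differ at that position. Once this component bookkeeping is in place, the countability of the fibers of $X$ over $\partial \Omega$ follows.
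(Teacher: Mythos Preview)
Your approach is quite different from the paper's. The paper argues directly and briefly: since $\x \in \partial\Omega$, its orbit reaches a vertex at some time; at that time there are at most $q+2r$ possible symbolic continuations (``extensions by continuity of the dynamics and of the code''); each continuation may again reach a vertex, with again at most $q+2r$ further choices; and this happens at most countably often. Your route through the $2$-dimensional phase space near $x_0$ and the components cut out by vertex-hitting loci is considerably more elaborate, and it has two real gaps.

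First, the parameterization by $x_0^{(n)}$ does not determine $\x^{(n)}$ in the backward direction: the billiard with spy mirrors is non-invertible, so for $j<0$ there is no well-defined $T^{j}y$, and the backward ``vertex-hitting locus'' is not a subset of a neighborhood of $x_0$. Second, and more seriously, your final counting step does not close. The code on a fixed component $C_\alpha$ is an element $\a^\alpha \in \Sigma$, not $\partial\Sigma$; any $\a \in A(\x)\subset\partial\Sigma$ arises as a limit along a sequence $x_0^{(n)}\to x_0$ that must pass through \emph{infinitely many} components (a sequence eventually confined to one $C_\alpha$ yields the constant code $\a^\alpha\in\Sigma$). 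You are therefore trying to bound the number of accumulation points of the countable set $\{\a^\alpha\}$ in $\overline{\Sigma}$, and a countable subset of a Cantor space can have uncountably many accumulation points. Your last paragraph (same component $\Rightarrow$ same code, different components $\Rightarrow$ different codes) only shows that $\alpha\mapsto\a^\alpha$ is injective; it says nothing about how many \emph{limits} of these codes there are as $x_0^{(n)}\to x_0$.
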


\begin{proof}
Suppose that $\x \in \partial \Omega$. Consider $\x^{(n)} \in \Omega$ such that $\x^{(n)} \to \x$.  
Let $\a^{(n)}$ be the code of $\x^{(n)}$, and $\a$ any limit point of the $\a^{(n)} $, then
clearly $\a \in \partial \Sigma$, and $\x \in X(\a)$.
At a certain time the orbit of $\x$ reaches a vertex.  There are clearly at most $q + 2r$ possible extensions by continuity
of the dynamics and of the code  (see, for example, Figure \ref{2015}  where the purple lines are the orbits that hit some vertex and several dotted lines represent possible extensions).  Each of these extensions may again reach a vertex,
so again each of them has at most $q+2r$ possible extensions.  This can happen at most
a  countable number of times.
\end{proof}

\begin{figure}[h]
\centering
\vspace{-3.5cm}
\hspace{-1cm} \mbox{\subfloat{
\includegraphics[scale=0.3]{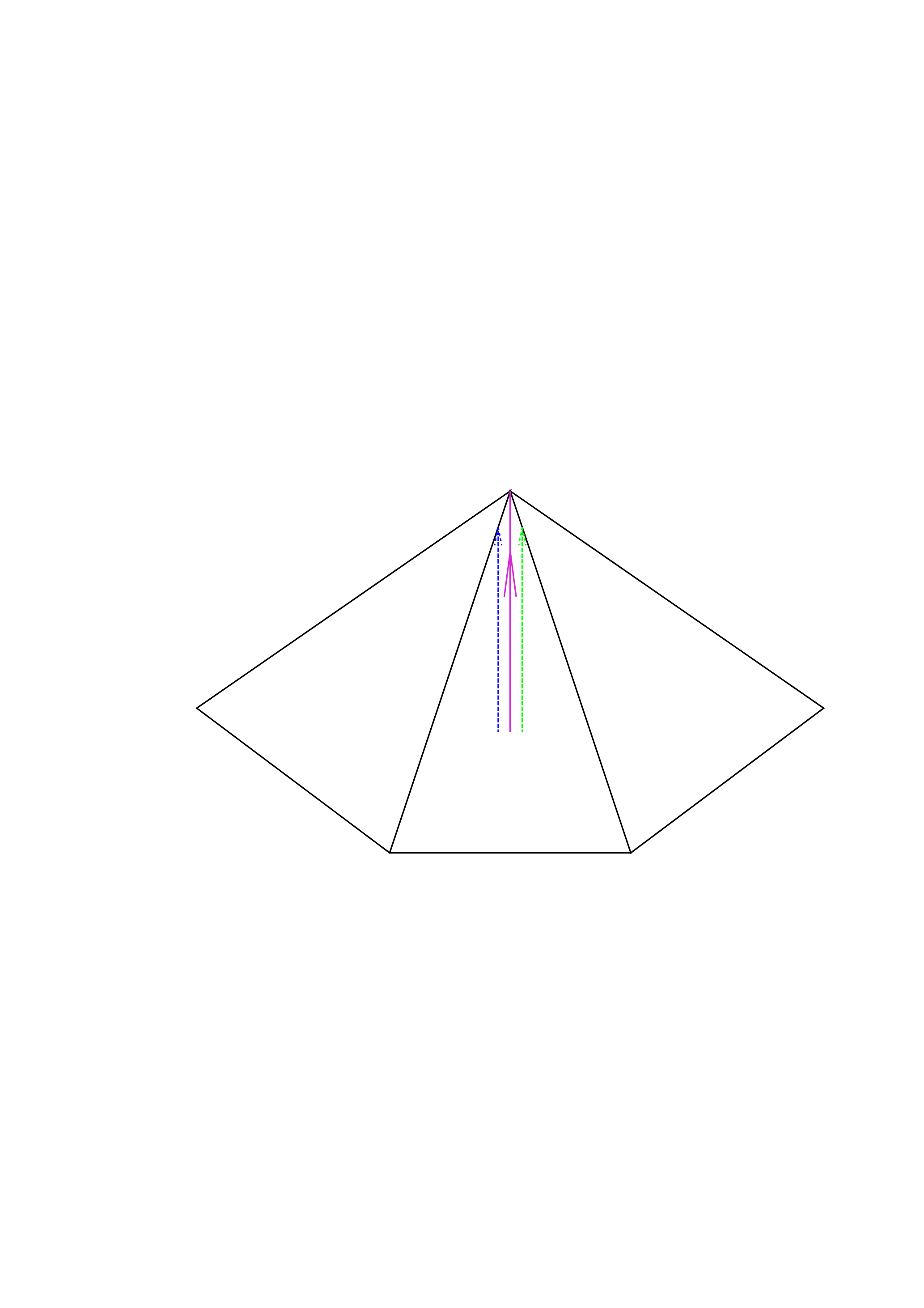}}
\hspace{-1.5cm}\mbox{\subfloat{
\includegraphics[scale=0.3]{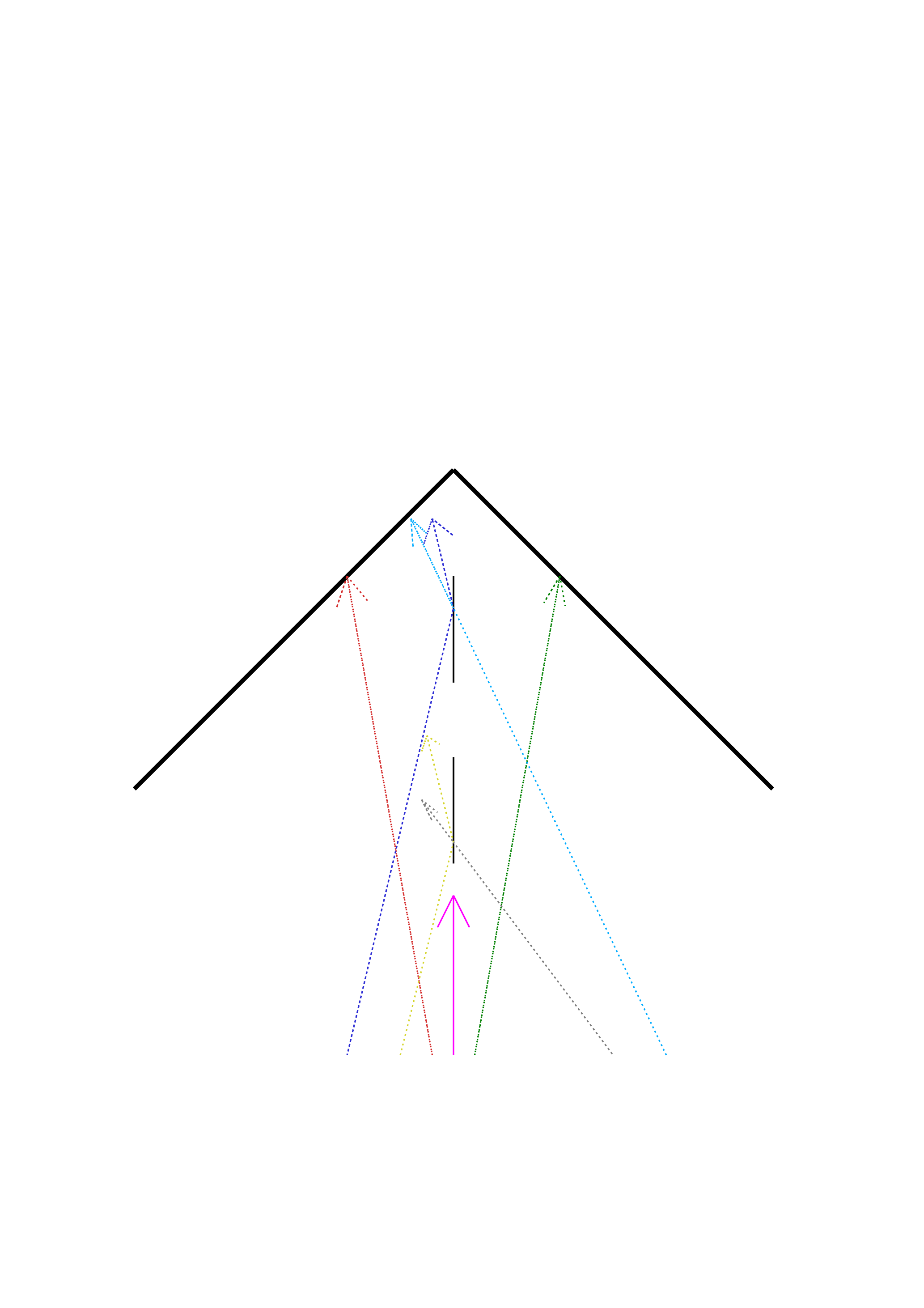}}}}
\vspace{-1.5cm}
\caption{The purple trajectory hits a vertex, all possible symbolic extensions are indicated via
close by orbits, there are 2 extensions for the left figure, 6 for the right figure.}
\label{2015}
\end{figure}

\subsection{The attractor}

For $\x$ in ${\Omega}$ let $Y(\x) = x_0$. 
The following proposition shows that $({\A},T)$ is a continuous factor of $({\Omega},\sigma)$.

\begin{proposition}\label{attractor}
(i) $\x \in \Omega \iff Y(\x) \in \A$,\\
(ii) $Y(\sigma \x) = T Y(\x)$,\\
(iii) the map $Y$ is continuous.
\end{proposition}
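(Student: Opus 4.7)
The three parts differ greatly in depth, so I would dispose of (ii) and (iii) immediately and reserve effort for (i). Part (iii) is essentially a tautology: $\Omega$ carries the product topology induced by the natural topology on $T\Gamma$, and $Y$ is simply projection onto the $0$-th coordinate, which is continuous in any product topology. For (ii), I would apply the defining identity $T x_i = x_{i+1}$ of $\Omega$ at $i = 0$ to compute
$$Y(\sigma \x) = (\sigma \x)_0 = x_1 = T x_0 = T Y(\x).$$

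The real content is (i). The forward implication is a direct iteration: if $\x \in \Omega$, then $x_{-n} \in T\Gamma$ and repeatedly applying $T x_i = x_{i+1}$ yields $T^n x_{-n} = x_0$, so $x_0 \in T^n(T\Gamma)$ for every $n \ge 0$, whence $Y(\x) \in \A$. For the reverse implication, I would build a consistent bi-infinite history of a given $x_0 \in \A$ by compactness. For each $n \ge 1$, the membership $x_0 \in T^n(T\Gamma)$ supplies a point $z^{(n)} \in T\Gamma$ with a well-defined $n$-step orbit satisfying $T^n z^{(n)} = x_0$; set $x^{(n)}_i := T^{n+i} z^{(n)}$ for $-n \le i \le 0$ and $x^{(n)}_i := T^i x_0$ for $i > 0$ where defined. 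Each coordinate lies in the compact space $\overline{T\Gamma}$, so a standard diagonal extraction yields a subsequence $(n_k)$ along which $x^{(n_k)}_i$ converges to some $x_i$ for every $i \in \Z$. The $0$-th coordinate is $x_0$ by construction, and passing to the limit in $T x^{(n_k)}_i = x^{(n_k)}_{i+1}$, using continuity of $T$ off its finite singular set, gives $T x_i = x_{i+1}$.

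The main obstacle will be ensuring that the resulting $\x$ actually lies in $\Omega$ rather than merely in $\overline{\Omega}$: one must rule out the possibility that some limit coordinate $x_i$ hits a vertex, where $T$ is undefined. I would handle this by exploiting the definition of $\A$, which supplies preimages with well-defined orbits of arbitrary depth; a singular coordinate in the limit would force $x_0$ to fail membership in $T^m(T\Gamma)$ for some finite $m$, contradicting $x_0 \in \A$. Once (i) is established, parts (ii) and (iii) combine to realize $Y$ as a continuous semiconjugacy from $(\Omega, \sigma)$ onto $(\A, T)$, which is exactly the factor relation needed for the attractor's entropy bound in Theorem \ref{zero}.
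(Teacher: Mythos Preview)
Your treatment of (ii), (iii), and the forward implication in (i) matches the paper exactly; the paper simply records that (ii) and (iii) ``follow immediately from the definition of $\Omega$,'' and for the forward direction writes the same iteration $T^n(x_{-n})=x_0$.

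For the reverse implication in (i) the paper takes a different, more combinatorial route. Given $x_0\in\A$, it notes that each preimage set $T^{-i}(x_0)$ is nonempty, organizes $\bigcup_{i\ge 0}T^{-i}(x_0)$ into a rooted tree (an edge from $y\in T^{-i}(x_0)$ to $z\in T^{-(i+1)}(x_0)$ whenever $Tz=y$), and applies K\"onig's lemma: a finitely branching tree with all levels nonempty contains an infinite branch, which supplies the past $(x_i)_{i\le 0}$. No limits are taken and no singularity issue arises.

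Your compactness/diagonalization approach is workable, but the fix you propose for the singularity problem is not correct as stated. If the limit coordinate $x_{-m}$ happens to be a vertex, this does \emph{not} force $x_0\notin T^m(T\Gamma)$: there may be other length-$m$ preimage chains to $x_0$ that stay regular, and your particular subsequence simply failed to select one. What actually rescues your argument is the observation you never make explicit, namely that $T$ is finite-to-one, so each $T^{-i}(x_0)$ is a \emph{finite} set. Since for every $n\ge i$ your coordinate $x^{(n)}_{-i}$ lies in this finite set, a further pigeonhole refinement makes each negative coordinate eventually \emph{constant} rather than merely convergent; the limit then sits squarely in $\Omega$ and no vertex can appear. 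Once this is added, your argument and the paper's K\"onig's lemma argument are essentially the same construction viewed two ways.
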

\begin{proof}
(i) $\Longrightarrow$ Suppose $\x \in \Omega$, then $T^n(x_n) = x_0$ since $T(x_i) = x_{i+1}$ for all $i$.

\noindent$\Longleftarrow$ Suppose $x_0 \in \A$, then the set $\{T^{-i}(x_0)\}$ is non-empty for all $i \ge 0$.
Consider the tree structure on these sets, i.e.\ draw an arrow from  $x_i \in \{T^{-i}(x_0)\}$ to $x_{i+1} \in \{T^{-i-1}(x_0)\}$
iff $T(x_i) = x_{i+1}$.  Since the vertex set on level $n$ of the tree is non-empty for all $n$, there must be an infinite path in this 
tree, which defines the past of $\x := (x_i)$, and the future is defined by $T^i(x_0)$ with $i > 0$.

(ii)  and (iii) follow immediately from the definition of $\Omega$.
\end{proof}

\subsection{Entropy of ergodic measures} This subsection follows ideas in \cite{K}.
First, we show that nothing interesting happens on the boundaries.
\begin{lemma}\label{support}
Every ergodic non-atomic shift invariant measure on $\overline{\Sigma}$ is supported by the set  $\Sigma$.
\end{lemma}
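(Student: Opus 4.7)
The strategy is to isolate the ``bad'' boundary $\partial\Sigma := \overline{\Sigma}\setminus\Sigma$ and bound it by countably many null sets via a Poincar\'e recurrence argument. First, I observe that both $\Sigma$ and $\partial\Sigma$ are $\sigma$-invariant subsets of $\overline{\Sigma}$, so ergodicity forces $\mu(\Sigma)\in\{0,1\}$; it is thus enough to rule out $\mu(\partial\Sigma)=1$. A coding lies in $\partial\Sigma$ precisely when (using the extension of $X$ to $\overline{\Sigma}$ introduced just before Proposition~\ref{extend}) every $\x\in X(\a)$ belongs to $\partial\Omega$, i.e.\ its orbit hits a vertex of $\Gamma$ at some finite time.

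Motivated by this, I would define $B_n\subset\overline{\Sigma}$ as the set of codings $\a$ admitting some $\x\in X(\a)$ whose base point $x_n$ lies at a vertex. Shift-equivariance of the coding gives $B_n=\sigma^{-n}B_0$, so $\mu(B_n)=\mu(B_0)$ for every $n$, and the previous observation yields
\[
\partial\Sigma=\bigcup_{n\in\Z}B_n.
\]
The central claim is that for each $k\neq 0$ the intersection $B_0\cap B_k$ is at most countable, and therefore $\mu$-null by non-atomicity. Indeed, any element of $B_0\cap B_k$ is realised by some $\x\in\partial\Omega$ whose orbit segment between times $0$ and $k$ is a generalised diagonal, and the number of such diagonals is bounded by $N_{vert}(k)<\infty$. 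The extensions of $\x$ for indices $i<0$ and $i>k$ are obtained by successively continuing the orbit from the vertices $x_0$ and $x_k$; at each further vertex encountered the orbit has at most $q+2r$ preimages (resp.\ images), producing only a countable tree of completions in $\partial\Omega$. Finally, Proposition~\ref{extend} assigns to each such $\x\in\partial\Omega$ at most countably many codings $\a\in\overline{\Sigma}$. Stacking these three layers of countable branching gives countability of $B_0\cap B_k$.

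To conclude, suppose for contradiction that $\mu(B_0)>0$. Poincar\'e recurrence applied to $(\overline{\Sigma},\sigma,\mu)$ and the set $B_0$ yields that $\mu$-a.e.\ $\a\in B_0$ satisfies $\sigma^k\a\in B_0$, equivalently $\a\in B_0\cap B_k$, for infinitely many $k\ge 1$; hence
\[
\mu(B_0)\le\sum_{k\ge 1}\mu(B_0\cap B_k)=0,
\]
a contradiction. Therefore $\mu(B_0)=0$, so $\mu(\partial\Sigma)\le\sum_n\mu(B_n)=0$ and $\mu$ is supported on $\Sigma$. The main technical point to be careful about is the measurability of the sets $B_n$ (since the relation $X$ is multi-valued), which I would handle by realising $B_n$ as the projection onto $\overline{\Sigma}$ of a closed subset of $\overline{\Sigma}\times\overline{\Omega}$ and invoking universal measurability of analytic sets; the substance of the argument lies entirely in the countability bookkeeping at vertices, resting directly on Proposition~\ref{extend} and on the finiteness of generalised diagonals of fixed length.
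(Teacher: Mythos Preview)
Your proposal follows essentially the same route as the paper: decompose $\partial\Sigma$ into ``vertex at time $n$'' sets, use recurrence (you invoke Poincar\'e recurrence, the paper uses ergodicity directly) to force two vertex hits on the same orbit, identify the result with a generalised diagonal, and then use the countability of generalised diagonals together with Proposition~\ref{extend} to contradict non-atomicity.  The substance is identical; your version is slightly more explicit in breaking the bound into $\sum_k\mu(B_0\cap B_k)$, while the paper states the conclusion more directly.

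One small point to tighten: with your definition of $B_n$ via ``\emph{some} $\x\in X(\a)$ has $x_n$ at a vertex'', membership in $B_0\cap B_k$ gives an $\x$ with $x_0$ a vertex and a possibly different $\y$ with $y_k$ a vertex, so the sentence ``any element of $B_0\cap B_k$ is realised by some $\x$ whose orbit segment between times $0$ and $k$ is a generalised diagonal'' is not literally justified.  The paper sidesteps this by defining its sets $\Sigma_i$ via ``\emph{every} $\x\in X(\a)$ has $x_i$ at a vertex'', so that a single $\x$ witnesses both vertex hits; you can adopt the same quantifier and the rest of your argument goes through unchanged.
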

\begin{proof}

We suppose that  $\mu$ is an ergodic shift-invariant measure such that $\mu(\partial\Sigma)>0$.
Let $\Omega_i := \{\x\in\overline{\Omega}:x_{i}  \text{ is a vertex}\}$  and $\Sigma_i = \{\a: X(\a) \subset \Omega_i\}$.
Clearly
 $\partial \Sigma = \cup_i  \Sigma_i$, thus our assumption implies that $\mu(\partial\Sigma_{i})>0$ for some $i$.
Now by ergodicity, $\mu$-almost every point $\a \in \overline{\Sigma}$ should visit this $\Sigma_{i}$ an infinite number of times, i.e.\  for each $\x \in X(\a)$ 
there are times $j < k$ so that $\x_j$ and $\x_k$ are vertices, i.e the orbit of $\x$ is 
generalized diagonal. But the set of generalized diagonals is countable, by Proposition \ref{extend}
their lift is countable, and thus  the measure is atomic.
\end{proof}

Next we have
\begin{lemma}\label{zero'}
For every ergodic shift invariant measure $\mu$ supported by the set $\Sigma$ 
the entropy $h_{\mu}(\Sigma)$ 
is equal to zero.
\end{lemma}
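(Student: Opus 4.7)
The plan is to transfer the entropy computation from the symbolic system $(\Sigma,\sigma,\mu)$ to the inverse-limit system $(\Omega,\sigma,\tilde\mu)$, and then to invoke Rokhlin's theorem on one-sided generators: by Proposition \ref{unique} the past coordinates in $\Omega$ already determine every point, so the past $\sigma$-algebra exhausts the Borel $\sigma$-algebra and the measure-theoretic entropy must vanish.

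First I would dispose of the atomic case. If $\mu$ has an atom, ergodicity forces $\mu$ to be uniformly distributed on a single $\sigma$-periodic orbit in $\Sigma$, and $h_\mu(\sigma)=0$ trivially. Henceforth assume $\mu$ is non-atomic; since $\Sigma$ contains only countably many periodic sequences, $\mu$-a.e.\ $\a \in \Sigma$ is aperiodic. Next I would pull the situation back to $\Omega$. For such an aperiodic $\a$, Proposition \ref{unique}(1) applied to its backward part produces a single point $\varphi(\a) \in X(\a)\subseteq\Omega$. The assignment $\a\mapsto\varphi(\a)$ is Borel measurable, intertwines the two shifts, and is a measure-theoretic inverse of the natural coding $\Omega\to\Sigma$, $\x\mapsto(\pi(x_i))_{i\in\Z}$. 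Setting $\tilde\mu:=\varphi_\ast\mu$ exhibits $(\Sigma,\sigma,\mu)$ and $(\Omega,\sigma,\tilde\mu)$ as isomorphic measure-preserving systems, so $h_\mu(\sigma)=h_{\tilde\mu}(\sigma)$.

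The heart of the argument is to use Proposition \ref{unique} once more, this time to exhibit a one-sided generator on $\Omega$. Let $\mathcal{Q}$ be the finite partition of $\Omega$ into the cylinders $\{\x: \pi(x_0)=I_j\}$. Since distinct points of $\Omega$ with aperiodic codes have distinct bi-infinite codes, $\mathcal{Q}$ is a two-sided $\tilde\mu$-generator; but Proposition \ref{unique}(1) asserts more, namely that the backward coordinates alone already determine $\x$ for $\tilde\mu$-a.e.\ $\x$. Consequently
\[
\bigvee_{k=1}^{\infty}\sigma^{k}\mathcal{Q} \;=\; \mathcal{B}(\Omega) \pmod{\tilde\mu},
\]
and Rokhlin's formula for the invertible system $(\Omega,\sigma,\tilde\mu)$ with generator $\mathcal{Q}$,
\[
h_{\tilde\mu}(\sigma) \;=\; H_{\tilde\mu}\!\bigl(\mathcal{Q}\,\bigm|\,\bigvee_{k=1}^{\infty}\sigma^{k}\mathcal{Q}\bigr),
\]
then forces $h_{\tilde\mu}(\sigma)=0$, since the conditioning $\sigma$-algebra contains $\mathcal{Q}$ modulo null sets. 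Combined with the isomorphism above this yields $h_\mu(\sigma)=0$.

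The main obstacle I foresee is the measurability of the reduction from $\Sigma$ to $\Omega$: one has to verify carefully that $\varphi$ is Borel and well-defined on a $\mu$-full set, which requires showing that the ``bad'' parts of $\Sigma$ where Proposition \ref{unique} does not apply---periodic codes, and codes whose lifts run into vertices (cf.\ Proposition \ref{extend})---really are $\mu$-null. Once that point is dealt with, the remainder is a textbook application of the one-sided-generator principle.
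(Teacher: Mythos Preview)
Your argument is correct and rests on the same two ingredients the paper uses: Proposition~\ref{unique} to see that the past determines the point, and Rokhlin's formula $h_\mu=H(\xi\mid\xi^-)$ with the time-zero partition as a one-sided generator. The only real difference is that you make a detour through $\Omega$: you split off the atomic case, build the measurable isomorphism $\varphi:\Sigma\to\Omega$ on the aperiodic set, push $\mu$ forward, and then run Rokhlin's argument on $\Omega$. The paper avoids all of this by staying in $\Sigma$: it simply observes that every atom $P$ of $\xi^-=\bigvee_{j\ge 0}\sigma^{j}\xi$ is a single point of $\Sigma$, because in the aperiodic case $X(P)$ is a single point of $\Omega$ by Proposition~\ref{unique}(1), and in the periodic case $X(P)$ is a union of strips of periodic orbits \emph{with the same past and hence the same future code}, so again $P$ is a point. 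Thus $\xi$ is a one-sided generator on $\Sigma$ itself, and $h_\mu=H(\sigma\xi\mid\xi^-)=0$ without any case split or transfer to $\Omega$. Your route works, but the measurability of $\varphi$ and the handling of periodic codes that you flag as obstacles are precisely the extra bookkeeping the paper's more direct argument sidesteps.
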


\begin{proof}
We consider the time zero partition $\xi$ of $\Sigma$ and the partition $\xi^- : = \vee_{j=0}^{\infty} \sigma^{i}\xi$.
An element $P \in \xi^-$ corresponds to a code in $\Sigma^-$, there are two possibilities, either this
code is periodic, or not.

(i) The code is periodic, then Proposition \ref{unique} tells us that $X(P)$ consists of a finite union
of strips of periodic orbits, all with the same past code; thus all with the same future code. In particular
$P$ is a single point.

(ii) If the code is not periodic, then  Proposition \ref{unique} tells us the $X(P)$ is a single point in $\Omega$;
thus $P$ is a single point.

Now we apply Rokhlin's theory of entropy (\cite{R}),  we have shown that  $\xi$ is a one sided generating partition ($\xi^-$ is the partition into points), thus
$$h_{\mu}(\Sigma)=h_{\mu}(\Sigma,\xi) = H(\sigma \xi /\xi^{-})=0.$$
 \end{proof}

{\noindent \bf 2.5 \ Proof of entropy theorems.\\}

\begin{proofof}{Theorem \ref{zero}}
 $h_{top}(\overline{\Sigma},\sigma) = 0$ follows immediately from Lemma \ref{support}, Lemma \ref{zero'} and the variational principle. We have
$h_{top}(\overline{\Omega},\sigma) \le h_{top}(\overline{\Sigma},\sigma) $ since $(\overline{\Omega},\sigma)$ is a continuous factor  of $(\overline{\Sigma},\sigma)$
(Proposition \ref{extend}).
Similarly, since $(\A,T)$ is a continuous factor  of $({\Omega},\sigma)$ (Proposition \ref{attractor}), we have
$h_{top}(\A,T) \le h_{top}(\Omega,\sigma) \le h_{top}(\overline{\Omega},\sigma) $.
\end{proofof}

Let $f:X \to X$ be a continuous map of a compact topological space $X$.
Let $(\overline{X},f)$ denote the inverse limit  of the map $f$.
The {\it non wandering set} $NW(X)$ of a continuous
map $f:X \to X$ of a compact topological space $X$ is the set $\{x \in X:$ for each neighborhood 
$\mathcal{U}$ of $x$,  $\exists n \ne 0$ such that $f^n \mathcal{U} \cap \mathcal{U} \ne \emptyset\}$. 

\begin{proposition}\label{NW}
If $h_{top}(\overline{X},f)= 0$ the $h_{top}(X,f) = 0$.
\end{proposition}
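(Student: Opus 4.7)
The plan is to route the entropy comparison through two intermediate $f$-invariant sets: the ``surjective core'' $Y \defeq \bigcap_{n \ge 0} f^n(X)$ and the non-wandering set $NW(X)$. First, I would observe that the natural projection $\pi : \overline{X} \to X$ sending $\x = (x_i)_{i \in \Z}$ to $x_0$ is continuous and intertwines the shift with $f$, i.e.\ $f \circ \pi = \pi \circ \sigma$. A point $x_0 \in X$ lies in $\pi(\overline{X})$ precisely when it admits an infinite backwards $f$-orbit, and this is equivalent to $x_0 \in f^n(X)$ for every $n \ge 0$. So $\pi(\overline{X}) = Y$, exhibiting $(Y, f)$ as a continuous factor of $(\overline{X}, \sigma)$; hence $h_{top}(Y, f) \le h_{top}(\overline{X}, \sigma) = 0$.

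The second step is to prove $NW(X) \subset Y$. Fix $x \in NW(X)$ and $n \ge 1$. If $x$ is $f$-periodic it lies in every $f^n(X)$ trivially, so I may assume $x$ is not periodic. I would then argue that for every open $U \ni x$ there exist arbitrarily large $m$ with $f^m(U) \cap U \ne \emptyset$: otherwise the return times would be bounded, and a compactness/subsequence argument applied to pairs $(y_k, f^{m_k}(y_k))$ both converging to $x$ would produce $m \ge 1$ with $f^m(x) = x$, contradicting aperiodicity. Picking such an $m \ge n$ and $y \in U$ with $f^m(y) \in U$, we get $f^m(y) \in f^m(X) \subset f^n(X)$. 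Hence every neighborhood of $x$ meets $f^n(X)$, which is closed by compactness of $X$, so $x \in f^n(X)$. Letting $n$ vary gives $x \in Y$, and therefore $h_{top}(NW(X), f) \le h_{top}(Y, f) = 0$.

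The final step is the variational principle. By Poincar\'e recurrence, $\mu$-almost every point is recurrent, hence non-wandering, for any $f$-invariant Borel probability measure $\mu$; so every such $\mu$ is supported on $NW(X)$. Consequently
\[
h_{top}(X, f) \;=\; \sup_\mu h_\mu(f) \;=\; \sup_{\mathrm{supp}(\mu) \subset NW(X)} h_\mu(f) \;=\; h_{top}(NW(X), f) \;=\; 0.
\]
The only delicate ingredient is the inclusion $NW(X) \subset Y$; the heart of it is noticing that a non-periodic non-wandering point must have returns at arbitrarily large times, which is what lets high-order preimages of $x$ accumulate onto $x$ itself.
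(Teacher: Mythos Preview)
Your argument is correct, but it follows a different path from the paper's. The paper quotes two external results: Bowen's theorem that $h_{top}(X,f)=h_{top}(NW(X),f)$, and the fact (from Aoki--Hiraide) that the natural projection $p:\overline{X}\to X$ carries $NW(\overline{X})$ onto $NW(X)$. From these it is immediate: $(NW(X),f)$ is a factor of $(NW(\overline{X}),f)$, which has zero entropy since $\overline{X}$ does, and Bowen's theorem finishes. Your route instead passes through the surjective core $Y=\bigcap_{n\ge0}f^n(X)=\pi(\overline{X})$, establishes the inclusion $NW(X)\subset Y$ directly, and then recovers the needed direction of Bowen's theorem from the variational principle and Poincar\'e recurrence. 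What you gain is self-containment: apart from the variational principle everything is proved from scratch, whereas the paper's argument is a two-line appeal to the literature. What you lose is brevity; in particular the claim $\pi(\overline{X})=Y$ hides a small compactness argument (a point in every $f^n(X)$ need not obviously have a full backward orbit without using that each $f^n(X)$ is closed and a nested-intersection step), which you might want to make explicit.
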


 \begin{proofof}{Theorem \ref{zero''}}
 Theorem \ref{zero''} follows immediately by combining Theorem \ref{zero} and the proposition.\end{proofof}
 
 \begin{proofof}{Proposition \ref{NW}}
 We use a theorem of Bowen (\cite{Bo},Theorem 2.4) $h_{top}(X,f) = h_{top}(NW(X),f)$.
 Let $p : \overline{X} \to X$ be the natural projection map. 
Clearly $p \circ f (\x) = f \circ p (\x)$ for any $\x \in \overline{X}$,
which implies $h_{top}(NW(\overline{X}),f) \ge h_{top}(p(NW(\overline{X})),f)$;
therefore  $h_{top}(p(NW(\overline{\Sigma})),f)=0$.
But $p(NW(\overline{X})) =NW(X)$ (\cite{AH} Theorem 3.5.1(5)).
Thus applying once again the above mention theorem of Bowen  we conclude 
that  
 $0 =  h_{top}(NW(X),f) = h_{top}(X,f)$.
\end{proofof}

\section{The proofs of the complexity results}

Fix a $q$-gon $Q$ with $r$ spy mirrors. 
A {\em spy generalized diagonal} is an orbit which starts at a spy mirror (thus it has two backwards continuations)
and ends in a vertex of $Q$, thus its orbit has two backwards continuations with different codes.  Consider a spy generalized diagonal,
since the length of the spy generalized diagonal is finite, slightly varying the angle of arrival 
at the vertex will yield a spy generalized diagonal with the same code (see Figure \ref{Un}).

\begin{figure}[h]
\vspace{-3cm}
\includegraphics[width=7.1cm,height=10cm]{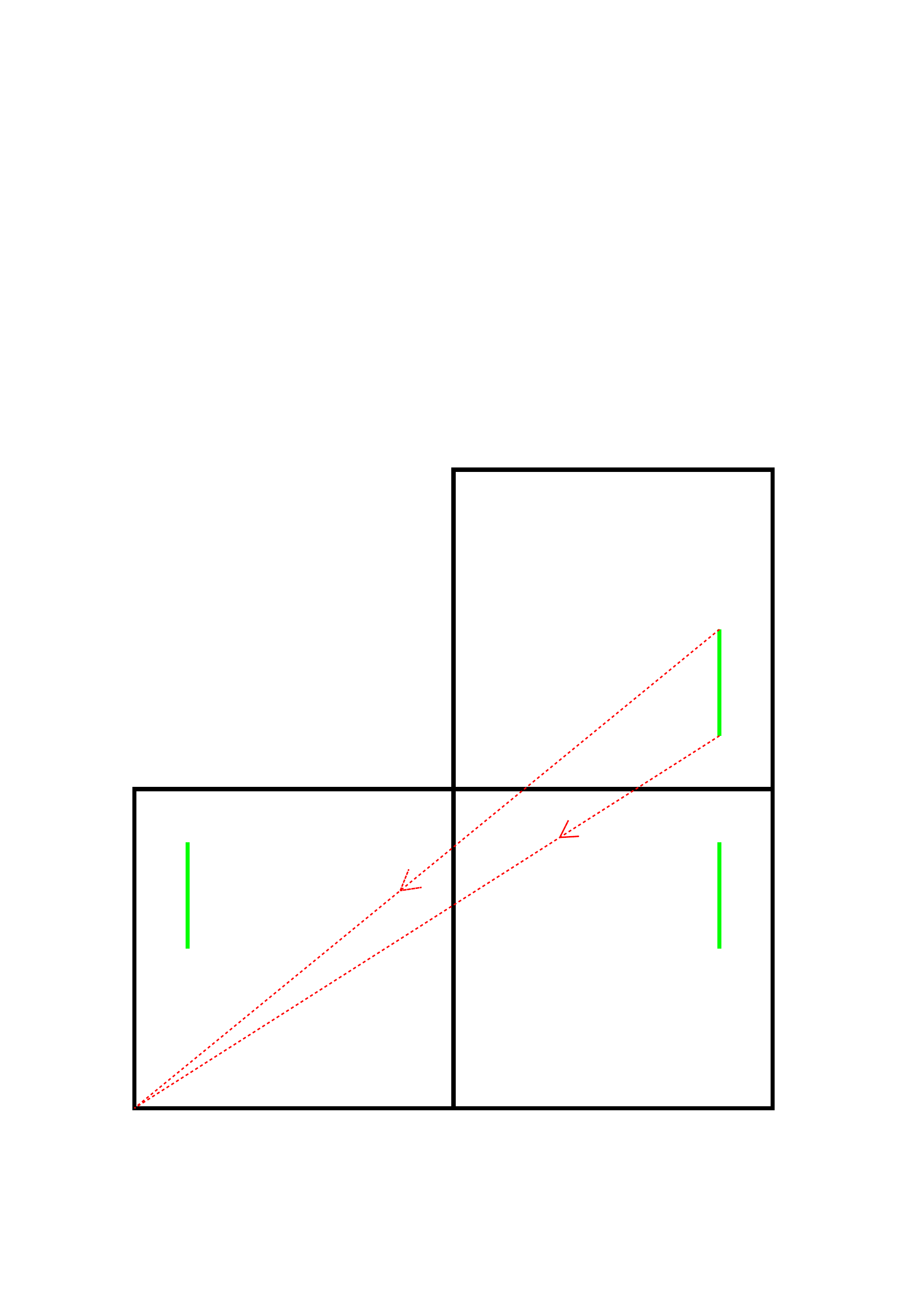}
\put(-100,145){$b$}
\put(-70,185){$a$}
\put(-32,145){$d$}
\put(-70,113){$c$}
\put(-100,75){$b$}
\put(-32,75){$d$}
\put(-180,75){$d$}
\put(-138,113){$c$}
\put(-138,45){$a$}
\put(-70,45){$a$}
\vspace{-1.5cm}
\caption{All the orbit segments at the wedge are spy generalized diagonals with the same code $cb$. This word
has 4 extension $e^{\pm}cba$ and $e^{\pm}cbd$ where $e^+$ and $e^-$ refer to the two sides of the one-sided mirror.}
\label{Un}
\end{figure} 

Let $N_{spy}(n)$ denote the number of such families of length at most $n$.
The set of spy
generalized diagonals with a given code form a ``sector'' at the vertex, the boundaries of this sector
are generalized diagonal of length at most $n$.

 The boundary of a spy generalized diagonal sector of length $n$ consists of 
two generalized diagonals of length at most $n$, one bounding from the left and one from the right (here left and right are with respect to a fixed orientation, say clockwise, at the vertices of arrival of
the generalized diagonals).  Thus we can define an injective map, the ``right bounding generalized diagonal'', from families of spy generalized diagonals of length $n$ to generalized diagonal of length at most $n$ yielding

\begin{proposition} \label{prop}
$N_{spy}(n) \le  \sum_{i=0}^{n} N_{vert}(i)$.
\end{proposition}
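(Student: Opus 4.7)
For each $k \ge 0$ let $\mathcal{F}_k$ denote the set of families of spy generalized diagonals of combinatorial length exactly $k$. The plan is to construct, for each $k$, an injective map $\Phi_k$ from $\mathcal{F}_k$ into the set of generalized diagonals of length at most $k$; summing the resulting inequalities $|\mathcal{F}_k| \le N_{vert}(k)$ over $k = 0, 1, \dots, n$ then yields
\[
N_{spy}(n) \;=\; \sum_{k=0}^n |\mathcal{F}_k| \;\le\; \sum_{k=0}^n N_{vert}(k),
\]
which is exactly the inequality to be proved.

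Given $F \in \mathcal{F}_k$, the discussion preceding the proposition shows that $F$ corresponds to an open sector of angles at the common terminal vertex $v \in Q$, all orbits of which share the same length-$k$ code. Fix once and for all a clockwise orientation at every vertex of $Q$, and consider the right-hand endpoint of this sector. As one approaches it, the orbits in $F$ converge to a limiting orbit which must degenerate in one of two ways: either its backward continuation reaches an endpoint of the spy mirror at step $k$, producing a genuine generalized diagonal of length exactly $k$ from a spy-mirror vertex to $v$; or some intermediate backward iterate passes through a vertex of $Q$ at step $j < k$, producing a generalized diagonal of length $k-j$ from that intermediate vertex to $v$. In either case we obtain a generalized diagonal of length at most $k$, which we declare to be $\Phi_k(F)$.

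The main step, and the one I expect to require the most care, is injectivity of $\Phi_k$. Suppose $\Phi_k(F) = \Phi_k(F') = \gamma$. Each of $F$ and $F'$ is recovered by perturbing $\gamma$ off its singular vertex passage to the left in the chosen orientation and propagating the resulting nonsingular orbit backward, so it suffices to check that this left perturbation uniquely determines the family. The delicate issue is that the backward billiard flow is two-valued at every spy-mirror crossing, so a priori there is a branching choice at each such crossing. However, within a single family the code is fixed, so the backward continuation at each spy mirror is forced by that code, and the perturbation therefore propagates unambiguously until the next singular angle is reached. Hence $F = F'$, the map $\Phi_k$ is injective, and the stated bound follows by the summation displayed above.
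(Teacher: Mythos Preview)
Your approach is exactly the paper's: for each fixed length $k$, send a family of spy generalized diagonals to the generalized diagonal bounding its sector on the right (with respect to a fixed orientation at the terminal vertex), use injectivity of this assignment to obtain $|\mathcal F_k|\le N_{vert}(k)$, and then sum over $k\le n$. The paper's entire argument is the short paragraph immediately preceding the proposition, which simply \emph{asserts} that the ``right bounding generalized diagonal'' map is injective; your write-up already matches, and in level of detail exceeds, what the paper provides.

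One remark on your injectivity paragraph: the sentence ``within a single family the code is fixed, so the backward continuation at each spy mirror is forced by that code'' is circular as phrased. You are trying to show that $\gamma$ alone determines the family $F$, so invoking the (as yet unknown) code of $F$ to resolve the backward branching at spy mirrors begs the question. What you need is that two distinct length-$k$ families cannot share the \emph{same} right-bounding generalized diagonal $\gamma$; this comes from the fact that $\gamma$ is a specific orbit with a specific arrival direction at $v$, so the two sectors would have to share a right endpoint and hence overlap, which is the point to make explicit. The paper does not address this either---injectivity is left as a bare assertion---so this is not a divergence from the paper, just a place where your added detail does not yet close the loop.
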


\begin{proofof}{Theorem \ref{thm1}}
The main technical tool will be a variant of Cassaigne's formula \cite{C} developed in \cite{ST}.
Let $\A$ be a finite alphabet, $\L \subset \A^{\mathbb{N}}$ be a language, $\L(n)$ the set of words of length $n$ which appear in $\L$, and $p(n) := \# \L(n)$.
Note that $p(0) = \# \{\emptyset\} = 1$.
For any $n \ge 0$ let $s(n) := p(n+1) - p(n),$ and thus $$p(n) = 1 + \sum_{j=0}^{n-1} s(j).$$
For $u \in \L(n)$ let
\begin{eqnarray*}
m_l(u) & := & \#\{a \in \A: au \in \L(n+1)\},\\
m_r(u) & := & \#\{b \in \A: ub \in \L(n+1)\},\\
m_b(u) & := & \#\{(a,b) \in \A^2: aub \in \L(n+2)\}.
\end{eqnarray*}
We remark that while $m_r(u) \ge 1$ the other two quantities can be $0$.
A word $u \in \L(n)$ is called {\em left special} if $m_l(u) > 1$, {\em right special}
if $m_r(u) > 1$ and {\em bispecial} if it is left and right special.
Let $\BL(n) := \{u \in \L(n): u \text{ is bispecial}\}.$
Let $\L_{np}(n):= \{v \in \L(n): m_l(v) = 0 \}$.

In \cite{ST} we showed that 
$$s(j+1) - s(j) = 
\sum_{v \in \BL(j)} \Big (m_b(v) - m_l(v) - m_r(v) + 1 \Big )   - \sum_{v \in \L_{np}(j):m_r(v) >1}  
\Big (m_r(v) - 1 \Big ). $$

Now we turn to the interpretation of this formula in the case that  $v$ is a bispecial 
word appearing in a $q$-gon $Q$ with $r$ spy mirrors.   
We make a worst case estimate, apriori the collection of orbits with the code $v$ can hit all sides and all spy mirrors (forwards or backwards)

\begin{eqnarray*}
2 & \le   m_l(v) \le & (q+2r)\\
2  & \le     m_r(v)  \le &  (q+2r)\\
2 & \le   m_b(v)   \le  & (q+2r)^2
\end{eqnarray*}
Thus
$$s(j+1) - s(j) \le ((q+2r)^2-3)\#\BL(j)$$
We have  $p(0) = 1$ (the empty set) and $p(1) = q+2r$, thus $s(0) = p(1) -1 = q+2r-1$ and the method of telescoping sums yields
$$
s(n)   \le (q+2r - 1)  + ((q+2r)^2-3)\sum_{j=0}^{n-1} \#\BL(j).$$
To each bispecial word $v$ corresponds a collection of generalize diagonals and/or spy generalized diagonal sectors of the same combinatorial length and code. Since the collections are determined by their code they are disjoint, thus 
$$\sum_{j=0}^{n-1} \#\BL(j) \le N_{vert}(n) + N_{spy}(n) \le N_{vert}(n) + \sum_{i=0}^n N_{vert}(i) \le 2 \sum_{i=0}^n N_{vert}(i), $$
and thus
$$s(n) \le (q+ 2r - 1)  + 2((q+2r)^2-3) \sum_{i=0}^n N_{vert}(i).$$

Again by the method of telescoping sums we have
\begin{eqnarray*} p(n) \le  1 + \sum_{j=0}^{n-1} \left ((q+2r - 1)  + 2((q+2r)^2-3)\sum_{i=0}^{j}N(i)) \right ) \\
=  1 + (q+2r- 1)n  +  \left (2((q+2r)^2-3)\sum_{j=0}^{n-1} \sum_{i=0}^{j} N_{vert}(i) \right ).
\end{eqnarray*}
\end{proofof}

\begin{proofof}{Theorem \ref{thm2}}
Consider a rational polygon $P$ without spy mirrors. H. Masur in \cite{M} has shown that the number $N_g(t)$ of generalized
diagonals of geometric length $t$ satisfies $N_g(t) \le Ct^2$ for some constant $C = C(P)$ for all $t  \ge 0$.  
By elementary
reasoning there is a constant $B > 1$ such that $ B^{-1} \le N_{vert}(n)/N_g(n) \le  B$, thus there
is a constant $K = K(P)$ so that 
$N_{vert}(n) \le K n^2$ for all $n \ge 0$.

Suppose that the polygon $Q$ with spy mirrors is a $k$ fold cover of the rational polygon $P$.
By the symmetry in the definition of $Q$ each generalized diagonal in $Q$  projects to a generalized
diagonal in $P$, thus $N_{vert}^Q(n) \le k N^Q_{vert}(n) \le kK n^2$. Thus the Theorem follows from  Theorem \ref{thm1}.
\end{proofof}

\begin{proofof}{Theorem \ref{thm3}}
D. Scheglov in \cite{Sch} has shown that for a typical triangle, for every $\varepsilon > 0$ there exists a constant $C > 0$ so that 
$N_{vert}(n) \le Ce^{n^{\varepsilon}}$  for all $n  \ge 0$.  

Suppose that the polygon $Q$ with spy mirrors is a $k$ fold cover of  typical triangle $P$.
As in the proof of Theorem \ref{thm2}, each generalized diagonal in $Q$  projects to a generalized
diagonal in $P$, thus $N_{vert}^Q(n) \le k N^Q_{vert}(n) \le kC  e^{n^\varepsilon}$ and we again use Theorem
\ref{thm1} to conclude.
\end{proofof}

\begin{proofof}{Theorem \ref{thm4}}
First suppose the square is $1 \times 1$ and  with a single   spy mirror having $x$ coordinate $a$.
Let $X$ denote the torus ($2 \times 2$) obtained by unfolding the square with the one-sided mirrors identified as in the figure \ref{Sq}.
 
\begin{figure}[h]
\vspace{-3cm}
\includegraphics[width=7.1cm,height=10cm]{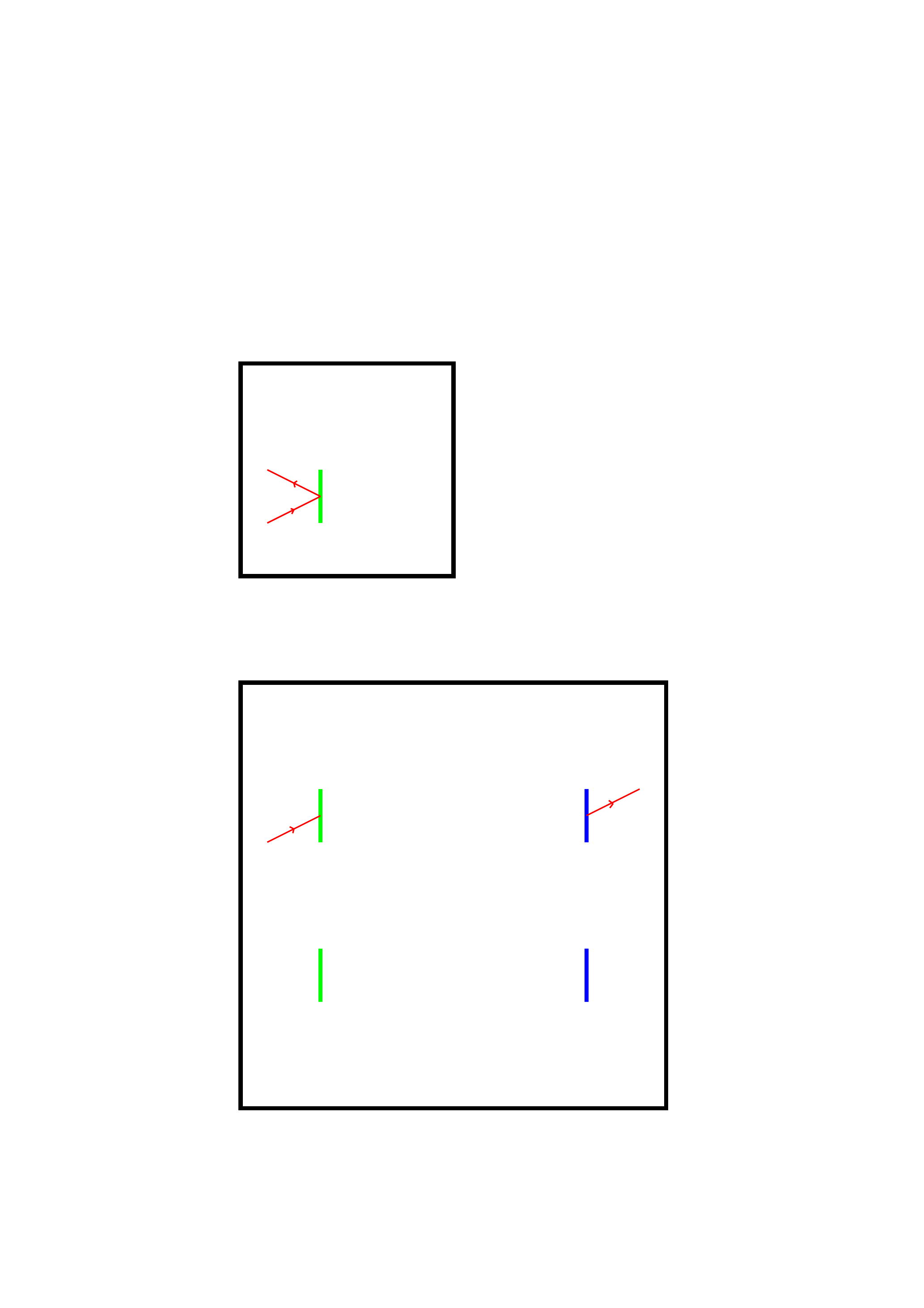}
\vspace{-1.5cm}
\caption{A square billiard with the vertical mirror: table and unfolding}
\label{Sq}
\end{figure} 

Now we construct an explicit universal cover of $X$, first
we unfold the square to a torus and then take the universal cover of the torus by the plane. Next we identify
the one-sided mirrors at $x=2-a +2n$ with the one-sided mirror at $x =  2 + a + 2n$, i.e.\  all jumps are to the right
and of the form $2a$ 
(see Figure \ref{Cover}).

\begin{figure}[h]
\vspace{-3.8cm}
\includegraphics[width=8cm,height=11cm]{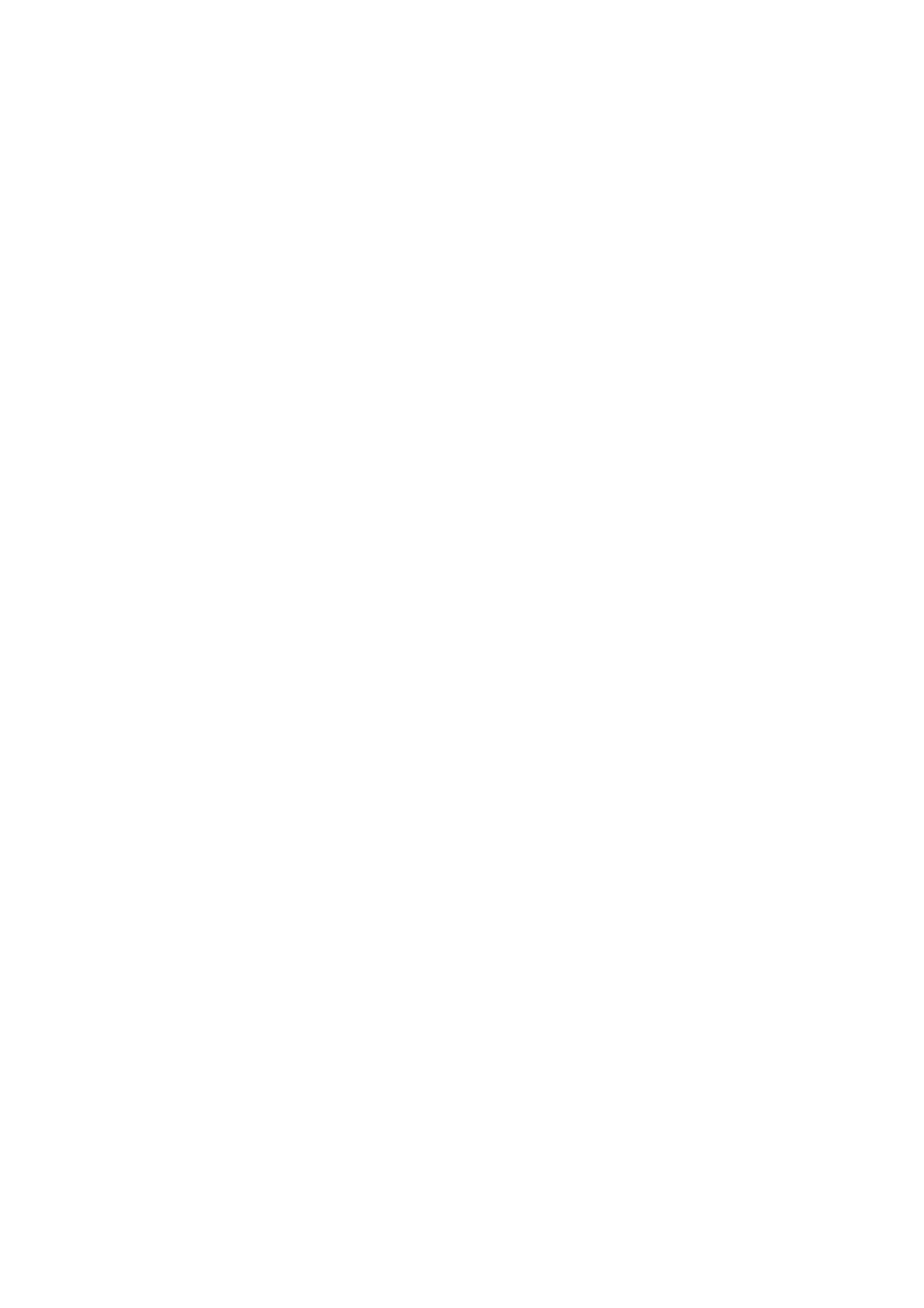}
\put(-129,195){\vector(0,-1){35}}
\put(-160,200){\footnotesize{$x=2-a+2n$}}
\put(-103,205){\vector(0,-1){45}}
\put(-120,210){\footnotesize{$x=2 + a+2n$}}
\vspace{-1cm}
\caption{A square billiard with the vertical mirror: universal cover}
\label{Cover}
\end{figure}

Now consider the generalized diagonals starting at the origin which arrives at the point $(m,n)$ ($m>0$ and $n>0$) which do not pass through
any other vertex in-between. Label the squares traversed by the generalized diagonals by the coordinates of their bottom left corners.
By the explicit construction of our universal cover, when the generalized diagonal is in the square with bottom left
corner $(i,j)$ then either we cross a horizontal side and are in square $(i,j+1)$ or we cross a spy mirror, or a vertical side and are in square $(i+1,j)$.
Thus we cross $n$ horizontal sides $y=1,y=2,\dots,y=n-1$, $f$ vertical sides and $g$ vertical mirrors with $f+g = m-1$. We conclude that 
the combinatorial length of any such generalized diagonal is $(m-1)+f + g = m + n-2$. We want to estimate how many such generalized diagonals we can have. Clearly there is at most one which hits no spy mirrors, it
is the line segment of slope $n/m$ starting at the origin and ending at $(m,n)$, and it is a generalized diagonal if and only if this segment does not
reflect form any spy mirrors. 

Now we claim that, for each $0 \le j \le m-1$
 there is at most one generalized diagonal connecting $(0,0)$ to $(m,n)$
which hits exactly $j$ spy mirrors.  In fact, the slope of such a generalized diagonal must be $n/(m - j2a)$, and then if we start an orbit
segment at the origin with this slope it is a generalized diagonal (arrives at $(m,n)$)  if and only if it hits exactly $j$ spy mirrors. 

Thus there are at most $m$ generalized diagonals connecting $(0,0)$ to $(m,n)$, all other cases of pairs of vertices are similar.
There are $const\cdot N^2$ lattice points and ends of spy mirrors at a distance $N$ of the origin, 
 yielding
a cubic estimate for the number of generalized diagonals and thus  a quintic estimate on the complexity by Theorem \ref{thm1}.

Now consider the general case.  Let $x=a_i$, $1 \le i \le k$ be the $x$ coordinates of the finite collection of vertical mirrors (there can be several
mirrors with the same $x$ coordinate).
The reflecting sides of the mirrors are not necessarily the same.
The universal cover identifies the copy of each mirror in $x = 2 - a_i  + 2n$ with the associated one-sided mirror at $x = 2 + a_i + 2n$ for
mirrors with the reflecting side of the left say, and if the right side is  reflecting then $x = a_i + 2n$ with the associated mirror at   $x = 2 - a_i + 2n$.  As we trace a trajectory it always
jumps to the right, by $b_i := 2a_i$ in the first case, and $b_i := 2-2a_i$ in the second case.

Consider the generalized diagonal connecting $(0,0)$ to $(m,n)$.  Let $0 \le j_i \le m-1$ denote the total number spy mirrors with $x$-coordinate 
$a_i$ which the generalize diagonal hits, and $0 \le j_0 \le m-1$
denote the number of  vertical sides it crosses.
For each $0 \le \ell < m-2$ it hits exactly one spy mirror or one vertical mirror in each rectangle $\ell < x \le \ell +1$, thus  
\begin{equation}
\sum_{i=0}^k j_i = m-1\label{s}
\end{equation}
 (note that it can not hit a vertical mirror in the
rectangle $m-1 < x \le m$ and arrive at the point $(m,n)$ and by definition it arrives, but does not cross the side $x=m$). 
Furthermore  the total horizontal distance
travelled by the generalized diagonal is $n - \sum_{i=1}^k j_i b_i$. Since the vertical distance it travelled is $m$ it has slope $n/(m - \sum_i j_i b_i)$.

We claim by induction that the number $Q_k(m)$ of integer solutions  of \eqref{s} with each $0 \le j_i \le m-1$
satisfies $Q_k(m) \le m^k/k!$ for all $m \ge 1$. For $k =1$ equality holds, this was used  above.
Suppose that this is true for some fixed $k$. To estimate $Q_{k+1}(m)$ suppose that $k_0 = p
\in \{0,1,\cdots,m-1\}$,
then $\sum_{i=1}^k k_i = m - 1 -p$ which has $Q_k(m-p) \le (m-p)^k/k!$ solutions. Thus 
$Q_{k+1}(m) \le \sum_{p=0}^{m-1} (m-p)^k/k! \le m^{k+1}/(k+1)!$.

As above we have shown that for each choice of the set $\{j_i\}$ we have at most one generalized diagonal connecting the origin to $(m,n)$.
Thus there are at most $O(m^k)$  generalized diagonals connecting $(0,0)$ to $(m,n)$, all other cases of pairs of vertices are similar. Again 
there are at most  $O(N^2)$ lattice points and ends of spy mirrors at a distance of at most $N$ from the origin, yielding
at most $O(N^{k+2})$  generalized diagonals of length $N$ and thus   the complexity is at most $O(N^{k+4})$ by Theorem \ref{thm1}. 
\end{proofof}

Remark, we actually prove a slightly stronger theorem
\begin{theorem}
Suppose that $Q$ is  the square with a finite number of  vertical spy mirrors cottoned in $k$ vertical lines.
Then the total complexity satisfies
$p(n) \le Cn^{k+4}$ for all $n$. 
\end{theorem}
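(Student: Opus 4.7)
The plan is to follow the proof of Theorem~\ref{thm4} essentially verbatim, observing that its key combinatorial input is not the total number of spy mirrors but the number of distinct $x$-coordinates on which they lie. First I would unfold the square to the torus and pass to the universal cover as in Figure~\ref{Cover}, making the identifications across each of the $k$ vertical lines $x = a_1, \dots, x = a_k$ carrying the spy mirrors. The crucial observation is that a reflection at any mirror lying on the $i$-th line induces the same $x$-coordinate jump $b_i$ (either $2a_i$ or $2 - 2a_i$, determined by which side of that line reflects), so the universal cover features only $k$ distinct jump values, regardless of how many individual mirrors share a given vertical line.

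Next I would redo the counting of generalized diagonals from $(0,0)$ to a terminal lattice/portal point $(m,n)$. Letting $j_0$ denote the number of vertical sides crossed and $j_i$ the number of spy-mirror crossings on the $i$-th vertical line, the constraint $\sum_{i=0}^k j_i = m - 1$ and the slope identity $n/\bigl(m - \sum_{i=1}^k j_i b_i\bigr)$ still force at most one generalized diagonal for each admissible tuple $(j_0,\dots,j_k)$. The bound $Q_k(m) \le m^k/k!$ on the number of such tuples therefore applies unchanged, producing $O(m^k)$ generalized diagonals terminating at $(m,n)$.

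Finally, since the vertices of $Q$ together with the endpoints of the finite collection of spy mirrors sit inside a fixed bounded set and contribute only $O(N^2)$ admissible endpoints within distance $N$ of a fixed basepoint in the cover, I obtain $N_{vert}(N) = O(N^{k+2})$; Theorem~\ref{thm1} then yields $p(n) \le C n^{k+4}$ after the two telescoped sums, exactly as at the end of the proof of Theorem~\ref{thm4}. The only point requiring care is the assumption that all mirrors on a given vertical line share the same reflecting orientation, so that a single jump value $b_i$ can be assigned to that line; if this fails on some line one obtains two distinct jump values there, and the bound would weaken to $n^{2k+4}$. Under the natural reading of the hypothesis this complication does not arise, and beyond that verification the proof is a direct transcription of the argument for Theorem~\ref{thm4}.
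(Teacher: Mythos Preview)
Your proposal is correct and is exactly what the paper does: this theorem is stated only as a remark that the proof of Theorem~\ref{thm4} already yields the stronger conclusion, because in that proof the mirrors are grouped by $x$-coordinate and a single jump value $b_i$ is assigned to each such coordinate. Your caveat about mirrors on the same line with opposite reflecting orientations is a genuine subtlety that the paper's formulation also leaves implicit; under the natural reading (one reflecting side per line) the argument is, as you say, a direct transcription of Theorem~\ref{thm4}.
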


\section{Acknowledgements.}  We gratefully acknowledge the support of ANR Perturbations. The first author was also partially supported by Dynasty Foundation.

\end{document}